\newtheorem{thm}{Theorem}
\newtheorem{prop}{Proposition}
\theoremstyle{definition}
\theoremstyle{remark}
\newtheorem{rem}{Remark}
\newcommand{\Real}{\mathbb R}
\newcommand{\eps}{\varepsilon}
\newcommand{\A}{\mathcal{A}}
\newcommand{\cH}{\mathcal{H}}
\newcommand{\cI}{\mathcal{I}}
\newcommand{\cJ}{\mathcal{J}}
\newcommand{\Ia}{\mathcal{I}_a}
\newcommand{\Ib}{\mathcal{I}_b}
\newcommand{\Iae}{\mathcal{I}_a^\varepsilon}
\newcommand{\Ibe}{\mathcal{I}_b^\varepsilon}
\newcommand{\Ae}{\mathcal{A}_\varepsilon}
\newcommand{\dom}{D}
\newcommand{\cL}{\mathcal{L}}
\newcommand{\rR}{\mathrm{R}_\zeta}
\newcommand{\rRz}{\mathrm{R}_z}
\newcommand{\Aao}{\mathring{A}_a}
\newcommand{\Abo}{\mathring{A}_b}
\newcommand{\Bo}{\mathring{B}}
\newcommand{\spn}{\mathop{\rm span}}
\begin{document}

\title[On spectrum of strings]{On spectrum of strings with $\delta'$-like perturbations of mass density}%
\author{Yuriy Golovaty}%

\subjclass[2010]{34B24, 34L05,34L10}%

\keywords{Sturm-Liouville operator, adjoint mass, concentrating mass, singular perturbation, spectral problem, norm resolvent convergence, Hausdorff convergence, non-self-adjoint operator}%

\begin{abstract}
We study the asymptotic behaviour of eigenvalues and eigenfunctions of a boundary value problem for  the Sturm-Liouville operator with ge\-ne\-ral boun\-da\-ry conditions and  the weight function perturbed by the so-called $\delta'$-like sequence $\eps^{-2}h(x/\eps)$.
The eigenvalue problem is realized as a family of non-self-adjoint matrix operators acting on the same Hilbert space and the norm resolvent convergence of this family is established.  We also prove the Hausdorff convergence of the perturbed spectra.
\end{abstract}
\maketitle
\section{Introduction}

The vibrating systems with added masses
have become the subject of research for mathematicians and physicists
since the time of Poisson and Bessel  \cite[Ch.2]{Sarpkaya2010}, and  an enormous number of studies have been devoted to these problems.
Many authors have investigated  properties of one-dimensional continua (strings and rods) with the mass density perturbed by the finite or infinite
sum $\sum_{k} M_k\delta(x-x_k)$, where $\delta$ is the Dirac function (see for instance \cite{Krylov1932, GantmakherKrein1950, TikhonovSamarskii1972, Timoshenko1972} and the references given there).
The mathematical models involving the $\delta$-functions are in general non suitable for 2D and 3D elastic systems, because the formal partial differential expressions which appear in the mo\-dels often have no mathematical meaning. Such models are also not adequate in the one-dimensional case, when
the added masses $M_k$ are large enough. The large adjoint mass can lead to a strong local reaction which brings about a considerable change in the basic form of the oscillations. But this reaction can not be described on the discrete set which is a support of singular distributions.
It is natural that the geometry of a small part of the vibrating system where the large  mass is loaded should also have an effect on eigenfrequencies and eigenvibrations.
Since works of E. S{\'a}nchez-Palencia \cite{PalenciaBook, MyFirstPaperOfSP, PalenciaHubertBook},  more adequate and more comp\-li\-cated mathematical models of  media with the concentrated masses have gained popularity; the asymptotic analysis began to be applied to  the spectral problems with the perturbed mass density having the form
\begin{equation*}
\rho_\eps(x)=\rho_0(x)+\sum_{k}\eps^{-m_k}h_k\left(\frac{x-x_k}{\eps}\right),
\end{equation*}
where $h_k$ are functions of compact support and $m_k\in\Real$.
The most interesting cases of the limit behaviour of eigenvalues and eigenfunctions as $\eps\to 0$ arise when the powers $m_k$ are greater than or equal to the dimension of vibrating system.

These improved  models have attracted considerable
attention in  the mathema\-ti\-cal literature over three past decades (see review \cite{LoboPerez2003}). The classic elastic systems such as strings, rods, membranes, plates and  bodies with the perturbed density $\rho_\eps(x)=\rho_0(x)+\eps^{-m}h(x/\eps)$ have been considered in \cite{Oleinik1987, OleinikM1988, OleinikK1988, GolovatyNazarovOleinikSoboleva1988, GolovatyjNazarovOleinik1990, OleinikIosifyanShamaevBook1990, GolovatyTrudy, Hrabchak1996},
where the convergence of spectra for each real $m$ and the complete asymptotic expansions of eigenvalues and eigenfunctions for  selected values of $m$ have been obtained. The influence of the concentrated masses on the spectral characteristics and oscillations of
junctions, the objects with very complicated geometry, has been studied
in  \cite{MelnikNazarov2001, Melnik2001, ChechkinMelnyk2012}.
The asymptotic behaviour of eigenvalues and eigenfunctions of membranes and bodies with many concentrated masses near the boundary has been investigated in
\cite{LoboPerez1993, LoboPerezSA1995, LoboPerez1995, Chechkin2005,  NazarovPerez2009}. In \cite{GolovatyLavrenyuk2000, GolovatyGomezLoboPerez2004} the asymptotic analysis has been applied to the spectral problems for membranes and plates with the  density perturbed in a thin neighbourhood of a closed smooth curve. The spectral problems on metric graphs that describe the eigenvibrations of elastic networks with heavy nodes have been studied in \cite{GolovatyHrabchak2007, GolovatyHrabchak2010}.

A characteristic feature of such problems is the presence of  perturbed density $\rho_\eps$ at the spectral parameter, which in turn leads to a self-adjoint operator realization  of the problem in a Hilbert space (a weighted Lebesgue space) that also depends on the small parameter. The study of families of operators acting on  varying  spaces entails some mathematical difficulties.    First of all, the question arises how to understand the convergence of such families. Next, if these operators do converge in some sense, does this convergence implies the convergence of their spectra (see  \cite[III.1]{OleinikIosifyanShamaevBook1990}, \cite{MelnikConvergence2001, MugnoloNittkaPost2010, Rosler2018} for more details).
Most of the above-mentioned publications deal with asymptotic approximations of eigenvalues and eigenfunctions; justifying  such asymptotics, the researchers used the theory of quasimodes \cite{LazutkinPDE5Viniti}, and therefore the question of the operator convergence can be avoided in the studies.

In this paper we consider  the Sturm-Liouville operators and investigate the eigenvalue problems with general boundary conditions and  the weight function perturbed by the so-called $\delta'$-like sequence $\eps^{-2}h(x/\eps)$.
By abandoning the self-adjointness, we realize the  perturbed problem as a family of non-self-adjoint matrix ope\-ra\-tors $\Ae$ acting on a fixed Hilbert space and prove the norm resolvent con\-ver\-gen\-ce of $\Ae$ as $\eps\to 0$. The operators $\Ae$ are certainly similar to self-adjoint ones for each $\eps$ and their spectra are real, discrete and simple. Surprisingly enough, the limit operator is essentially non-self-adjoint, because it possesses multiple eigenvalues with non-trivial Jordan cells. Actually the singularly perturbed problem gives us an example of some self-adjoint operators $T_\eps$ with compact resolvents acting on varying spaces $H_\eps$ that ``converge'' to a non-self-adjoint operator $T_0$ in space $H_0$. More precisely, the spectra of $T_\eps$ converge to the spectrum of $T_0$ in the Hausdorff sense, taking  account of the algebraic multiplicities of eigenvalues; moreover the limit position, as $\eps\to 0$,  of the  eigensubspaces of $T_\eps$ can be described by means of the root subspaces of $T_0$.

Note that a partial case of the problem, namely the Sturm-Liouville operator without a potential  subject to the Dirichlet type boundary condition, was previously studied in \cite{GolovatyNazarovOleinikSoboleva1988}. In Theorem~9,  the Hausdorff convergence of the perturbed spectrum  to some limit set was proved. This limit set was treated as a union of spectra of three self-adjoint operators (cf. Theorem~\ref{ThmSpA} below), but the limit operator was not  constructed and the question of eigenvalue multiplicity was not  discussed.

We use the following notation. Let $L_2(r, I)$ be the weighted Lebesgue space with the norm
\begin{equation*}
  \|f\|_{L_2(r, I)}=\left(\int_Ir(x)|f(x)|^2\,dx\right)^{1/2},
\end{equation*}
provided $r$ is positive.
Throughout the paper, $W_2^k(I)$ stands for the Sobolev space of functions defined on $I\subset\Real$ that belong to $L_2(I)$
together with their derivatives up to the order $k$. The norm in $W_2^k(I)$ is given by
$$
    \|f\|_{W_2^k(I)}
        := \bigl(\|f^{(k)}\|^2_{L_2(I)} + \|f\|^2_{L_2(I)} \bigr)^{1/2},
$$
where $\|f\|_{L_2(I)}$ is the usual $L_2$-norm.
The spectrum, point spectrum and resolvent set of a linear operator $T$ are
denoted by $\sigma(T)$, $\sigma_p(T)$ and $\rho(T)$, respectively, and the Hilbert space adjoint operator of $T$ is $T^*$. For any complex number $z\in \rho(T)$, the resolvent operator $\rRz(T)$ is defined by $\rRz(T)=(T-z)^{-1}$. Also, we will sometimes  abuse notation and write column vectors as row vectors.

\section{Statement of  Problem}
Let $\cI=(a,b)$ be a finite interval in $\Real$ containing the origin and $\eps$ be a small positive parameter.
Set $\Ia=(a,0)$, $\Ib=(0, b)$, $\Iae=(a,-\eps)$, $\Ibe=(\eps,b)$ and $\cJ=(-1,1)$.
We study the limiting behavior as $\eps\to 0$ of eigenvalues
$\lambda^\eps$ and eigenfunctions $y_\eps$ of  the  problem
\begin{align}\label{PertPrbEq}
        -&y_\eps''+q(x)y_\eps=\lambda^\eps r_\eps(x) y_\eps,\quad x\in \cI,
        \\ \label{PertPrbCondA}
        &y_\eps(a)\cos\alpha+y_\eps'(a)\sin\alpha=0,
        \\\label{PertPrbCondB}
         &y_\eps(b)\cos\beta+y_\eps'(b)\sin\beta=0
\end{align}
with the singularly perturbed weight function
\begin{equation*}
    r_\eps(x)=
    \begin{cases}
        r(x), & x\in \Iae\cup \Ibe,\\
        \eps^{-2}h(\eps^{-1} x), & x\in (-\eps, \eps).
    \end{cases}
\end{equation*}
Assume that $\alpha, \beta \in \Real$,  $q, r \in L^\infty(\cI)$ and  $h \in L^\infty(\cJ)$;   $r$ and $h$ are uniformly positive.

For any fixed real $\alpha$, $\beta$ and positive $\eps$ small enough, problem \eqref{PertPrbEq}--\eqref{PertPrbCondB} admits a self-adjoint realization in the weighted  space $L_2(r_\eps, \cI)$.
Let us consider the Sturm-Liouville differential expression $\tau(\phi)=-\phi''+q\phi$. We introduce the operator
$T_\eps$ defined by $T_\eps\phi=r_\eps^{-1}\tau(\phi)$ on functions $\phi\in W_2^2(\cI)$ obeying boundary conditions \eqref{PertPrbCondA} and \eqref{PertPrbCondB}.
Hence $\{T_\eps\}_{\eps>0}$ is a family of self-adjoint operators in the varying Hilbert spaces $L_2(r_\eps, \cI)$. Of course the spectrum of $T_\eps$ is real, discrete and simple.

Problem \eqref{PertPrbEq}--\eqref{PertPrbCondB} can be also associated with a non-self-adjoint matrix operator in the fixed Hilbert space
$\cL=L_2(r,\Ia)\times L_2( h,\cJ)\times L_2(r,\Ib)$ as follows. Subsequently, we will write boundary conditions \eqref{PertPrbCondA} and \eqref{PertPrbCondB} for a function $\phi$ as $\ell_a \phi=0$ and $\ell_b \phi=0$ respectively.
Let us introduce  the new variable $t=x/\eps$ and set $w_\eps(t)=y_\eps(\eps t)$. Then the eigenvalue problem can be written in the form
\begin{align}\label{PertPrEq(a,0)}
&-y_\eps''+q(x)y_\eps=\lambda^\eps r(x) y_\eps,\quad x\in \Iae,
   \qquad \ell_a y_\eps=0,\\\label{PertPrEq(-1,1)}
&-w_\eps''+\eps^2 q(\eps t) w_\eps=\lambda^\eps  h(t) w_\eps,\quad t\in \cJ,\\
        \label{PertPrEq(0,b)}
&-y_\eps''+q(x)y_\eps=\lambda^\eps r(x) y_\eps,\quad x\in \Ibe, \qquad
\ell_b y_\eps=0
\end{align}
with the  coupling conditions
\begin{gather}\label{PertPrTransConds0}
   y_\eps(-\eps)=w_\eps(-1),\quad y_\eps(\eps)=w_\eps(1),  \\\label{PertPrTransConds1}
   \eps y_\eps'(-\eps)= w'_\eps(-1),\quad \eps y_\eps'(\eps)= w'_\eps(1).
\end{gather}
Let $\Aao$ be the operator in $L_2(r,\Ia)$ that is defined by $\Aao\phi=r^{-1}\tau(\phi)$ on functions $\phi$ belonging to the set
$\dom(\Aao)=\left\{\phi\in W_2^2(\Ia)\colon \ell_a\phi=0\right\}$.
Similarly,  let $\Abo$ be the ope\-rator in $L_2(r,\Ib)$ such that $\Abo\phi=r^{-1}\tau(\phi)$ and $\dom(\Abo)=\left\{\phi\in W_2^2(\Ib)\colon \ell_b\phi=0\right\}$.
We also introduce the operator  $\Bo=- h^{-1}\frac{d^2}{dt^2} $ in
$L_2( h, \cJ)$ with domain $\dom(\Bo)= W_2^2(\cJ)$
and its  potential perturbation $\Bo_\eps=\Bo+\eps^2\frac{q(\eps t)}{ h(t)}$.

Let us consider the matrix operator
\begin{equation*}
    \Ae=
    \begin{pmatrix}
        \mathring{A_a} & 0 & 0\\
        0 & \mathring{B}_\eps & 0\\
        0 & 0 & \mathring{A_b}
    \end{pmatrix}
\end{equation*}
 in $\cL$, acting on  the domain
\begin{multline*}
    \dom(\Ae)=\big\{(\phi_a, \psi, \phi_b)\in \dom(\mathring{A_a})\times \dom(\mathring{B}_\eps)\times \dom(\mathring{A_b})\colon\\
               \phi_a(-\eps)=\psi(-1),\:\;\phi_b(\eps)=\psi(1),\:\;
               \eps \phi_a'(-\eps)=\psi'(-1),\:\; \eps
               \phi_b'(\eps)= \psi'(1)\big\}.
\end{multline*}
A straightforward calculation shows that $\Ae$ is non-self-adjoint.
Note that the spectral equation $(\Ae-\lambda^\eps)Y_\eps=0$ is slightly different from eigenvalue problem \eqref{PertPrEq(a,0)}--\eqref{PertPrTransConds1}.
In fact, if we display the components of vector $Y_\eps$ by writing  $Y_\eps=(y^a_\eps, w_\eps, y^b_\eps)$, then we see at once that $y^a_\eps$ is a solution of \eqref{PertPrEq(a,0)} on the whole interval $\Ia$ (not only in $\Iae$), and $y^b_\eps$ is a solution of \eqref{PertPrEq(0,b)} on the whole interval $\Ib$. However, this ``extra information'', namely the extensions of the solutions to the intervals $\Ia$ and $\Ib$, does not prevent the operator $\Ae$ from adequately describing the spectrum and the eigenfunctions of \eqref{PertPrEq(a,0)}--\eqref{PertPrTransConds1} (or also \eqref{PertPrbEq}--\eqref{PertPrbCondB}), because of the uniqueness of such extensions.

\begin{prop}
$\sigma(\Ae)=\sigma(T_\eps)$.
\end{prop}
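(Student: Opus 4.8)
The plan is to prove the equality by constructing an explicit invertible correspondence between the eigenvectors of the two operators, and then upgrading the resulting equality of point spectra to an equality of full spectra. For the inclusion $\sigma_p(T_\eps)\subseteq\sigma_p(\Ae)$ I start from an eigenvalue $\lambda$ of $T_\eps$ with eigenfunction $y\in W_2^2(\cI)$. Setting $w(t)=y(\eps t)$ on $\cJ$ and using the substitution $t=x/\eps$ turns equation \eqref{PertPrEq(-1,1)} into $\Bo_\eps w=\lambda w$. To produce the outer components I extend the restrictions $y|_{\Iae}$ and $y|_{\Ibe}$ to solutions $y^a$ on all of $\Ia$ and $y^b$ on all of $\Ib$ of the homogeneous equation $-\phi''+q\phi=\lambda r\phi$; such extensions exist, are unique and lie in $W_2^2$ by the existence and uniqueness theorem for linear ordinary differential equations, and they retain the boundary conditions $\ell_a y^a=0$, $\ell_b y^b=0$ since they coincide with $y$ near the endpoints $a$ and $b$. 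As $y\in W_2^2(\cI)\subset C^1(\cI)$, the quantities $y(\pm\eps)$ and $y'(\pm\eps)$ are well defined, and the relations $w(\pm1)=y(\pm\eps)$, $w'(\pm1)=\eps\,y'(\pm\eps)$ show that the triple $(y^a,w,y^b)$ satisfies exactly the coupling conditions defining $\dom(\Ae)$; it is therefore an eigenvector of $\Ae$ for $\lambda$.

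For the reverse inclusion I take an eigenvector $(y^a,w,y^b)$ of $\Ae$ for $\lambda$ and glue the pieces $y^a|_{\Iae}$, $w(x/\eps)$ on $(-\eps,\eps)$ and $y^b|_{\Ibe}$ into a single function $y$ on $\cI$. The two coupling conditions on the traces give continuity of $y$ at $\pm\eps$, while the two $\eps$-scaled conditions on the derivatives are precisely what is needed for continuity of $y'$ at $\pm\eps$ after the change of variables $t=x/\eps$; consequently $y\in W_2^2(\cI)$, it satisfies $\tau(y)=\lambda r_\eps y$ together with the boundary conditions, and hence $\lambda\in\sigma_p(T_\eps)$ as soon as $y\not\equiv0$. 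Nontriviality follows from uniqueness for the ODE: were $y\equiv0$, then $w\equiv0$ on $\cJ$ and $y^a$, $y^b$ would vanish on the subintervals $\Iae$, $\Ibe$, forcing $y^a\equiv0$ and $y^b\equiv0$ on all of $\Ia$ and $\Ib$ and contradicting $(y^a,w,y^b)\neq0$. These two constructions are mutually inverse, so $\sigma_p(\Ae)=\sigma_p(T_\eps)$.

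It remains to pass from point spectra to full spectra. The operator $T_\eps$ is self-adjoint with compact resolvent, so $\sigma(T_\eps)=\sigma_p(T_\eps)$ is a discrete subset of $\Real$; by the previous paragraph the same discrete set equals $\sigma_p(\Ae)$. Hence there are values of $\lambda$ for which the homogeneous problem $(\Ae-\lambda)Y=0$ has only the trivial solution. Since on $\dom(\Ae)$ this problem reduces to three second-order equations subject to six linear constraints (the four coupling conditions and the two boundary conditions $\ell_a\phi_a=0$, $\ell_b\phi_b=0$) imposed on the six constants of integration, its solvability is governed by a characteristic determinant that is entire in $\lambda$ and, by the foregoing, not identically zero. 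Therefore the resolvent set of $\Ae$ is nonempty; moreover the graph norm of $\Ae$ controls the norm of $W_2^2(\Ia)\times W_2^2(\cJ)\times W_2^2(\Ib)$, which embeds compactly into $\cL$, so $\Ae$ has compact resolvent and $\sigma(\Ae)=\sigma_p(\Ae)$. Combining everything yields $\sigma(\Ae)=\sigma(T_\eps)$.

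The step that requires the most care is this last reduction $\sigma(\Ae)=\sigma_p(\Ae)$: because $\Ae$ is non-self-adjoint one cannot take for granted that the spectrum consists of eigenvalues, and one must genuinely exclude residual and continuous spectrum by verifying that the resolvent set is nonempty and that the resolvent is compact. The eigenvector correspondence itself, while elementary, must be carried out with attention to the factor $\eps$ in the coupling conditions \eqref{PertPrTransConds1}, which is exactly what reconciles the $C^1$-matching of $y$ at $\pm\eps$ with the matching of the rescaled function $w$ at $\pm1$.
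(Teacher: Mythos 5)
Your proof is correct, but it follows a genuinely different route from the paper's. The paper works at the level of resolvents from the start: for $\zeta\in\rho(T_\eps)$ it transplants $y=(T_\eps-\zeta)^{-1}f$ into a solution of $(\Ae-\zeta)Y=F$ by extending $y$ from $\Iae$ and $\Ibe$ to $\Ia$ and $\Ib$ as solutions of the corresponding \emph{nonhomogeneous} equations, and conversely glues the components of $(\Ae-\zeta)^{-1}F$ into a solution of $(T_\eps-\zeta)y=f$, invoking the discreteness of $\sigma(T_\eps)$ to pass from solvability to $\zeta\in\rho(T_\eps)$; this yields $\rho(\Ae)=\rho(T_\eps)$ directly and never has to address whether $\sigma(\Ae)$ might contain residual or continuous spectrum. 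You instead build a bijection between eigenvectors, which gives only $\sigma_p(\Ae)=\sigma_p(T_\eps)$, and must then do the genuinely additional work of showing $\sigma(\Ae)=\sigma_p(\Ae)$: nonemptiness of $\rho(\Ae)$ via the $6\times6$ characteristic determinant of the three coupled second-order equations (nonzero wherever the kernel is trivial, e.g.\ at any nonreal $\lambda$, with the variation-of-parameters construction supplying a bounded inverse), followed by compactness of the resolvent through the graph-norm bound and the compact embedding of $W_2^2$ into $L_2$. You rightly flag this as the delicate step, and your treatment of it is sound. What your approach buys is the explicit eigenvector correspondence, which makes transparent why the redundant extensions of eigenfunctions to all of $\Ia$ and $\Ib$ encoded in $\dom(\Ae)$ do not alter the spectrum, and why the factor $\eps$ in \eqref{PertPrTransConds1} is exactly the right scaling; what the paper's approach buys is economy, and its solution formulas anticipate the explicit construction \eqref{ResolventOfAe} of $\rR(\Ae)$ used in the convergence proof.
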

\begin{proof}
Fix a positive $\eps$. We will show that $\rho(\Ae)=\rho(T_\eps)$.
Suppose first that $\zeta\in \rho(T_\eps)$ and consider the equation $(\Ae-\zeta)Y=F$, where  $F$ belongs to $\cL$.
Suppose that $F=(f_a,f_0,f_b)$. Then we can construct the function
\begin{equation*}
    f(x)=
    \begin{cases}
       \; f_a(x) &\text{for } x\in \Iae,\\
       \; f_0(x/\eps) &\text{for } x\in (-\eps, \eps),\\
       \; f_b(x) &\text{for } x\in \Ibe
    \end{cases}
\end{equation*}
belonging to $L_2(r_\eps, \cI)$. Next, $y=(T_\eps-\zeta)^{-1}f$ is a unique solution  of the problem
\begin{align}\label{EqnOnIae}
&-y''+qy-\zeta r y=rf_a\quad \text{in } \Iae,\qquad \ell_ay=0,\\
&-\eps^2 y''+\eps^2q y-\zeta  h y= h f_0\quad \text{in }(-\eps, \eps),\\\label{EqnOnIbe}
&-y''+qy-\zeta r y=rf_b\quad \text{in } \Ibe, \qquad\ell_by=0,
\\\label{CouplConds}
&\quad [y]_{-\eps}=0,\quad[y]_{ \eps}=0,\quad  [y']_{-\eps}=0,\quad  [y']_{\eps}=0,
\end{align}
where $[y]_{x_0}$ is a jump of  $y$ at the point $x_0$.
Denote by $y_a$ the extension of $y$ from $\Iae$ to $\Ia$ as a solution of \eqref{EqnOnIae}. Recall that the right hand side $f_a$ is defined on the whole interval $\Ia$. This extension is uniquely defined.
Similarly, we denote by $y_b$ the solution of  \eqref{EqnOnIbe} in $\Ib$ such that $y_b(x)=y(x)$ for $x\in \Ibe$.
Then  vector
$Y(x)=(y_a(x), y(x/\eps),y_b(x))$ belongs to $\dom(\Ae)$ and solves $(\Ae-\zeta)Y=F$. The last equation admits a unique solution $Y$;
if we assume that there are more such solutions, then we immediately obtain a contradiction with the uniqueness of $y$. Therefore, $\rho(\Ae)\subset\rho(T_\eps)$.

Conversely, suppose $\zeta\in \rho(\Ae)$. We prove that $(T_\eps-\zeta)y=f$ is uniquely solvable for all $f\in L_2(r_\eps, \cI)$.
Given $f$, construct  the vector $F=(f_a(x),f(\eps t),f_b(x))$,
where $f_a$ and $f_b$ are the restrictions of $f$ to $\Ia$ and $\Ib$ respectively. Then the problem
\begin{align*}
&-\phi_a''+q\phi_a-\zeta r \phi_a=rf_a\quad \text{in } \Ia,\qquad \ell_a\phi_a=0,\\
&-\psi''+\eps^2q(\eps \,\cdot) \psi  -\zeta  h \psi= h f(\eps \,\cdot)\quad \text{in }\cJ,\\
&-\phi_b''+q\phi_b-\zeta r \phi_b=rf_b\quad \text{in } \Ib,\qquad \ell_b\phi_b=0,
\\
&\quad\phi_a(-\eps)=\psi(-1),\:\;\phi_b(\eps)=\psi(1),\:\;
               \eps \phi'_a(-\eps)=\psi'(-1),\:\; \eps
               \phi'_b(\eps)= \psi'(1).
\end{align*}
admits a unique solution $Y=(\Ae-\zeta )^{-1}F$. If
$Y=(\phi_a, \psi, \phi_b)$, then  function
\begin{equation*}
    y(x)=
    \begin{cases}
       \; \phi_a(x) &\text{for } x\in \Iae,\\
       \; \psi(x/\eps) &\text{for } x\in (-\eps, \eps),\\
       \; \phi_b(x) &\text{for } x\in \Ibe
    \end{cases}
\end{equation*}
is a solution of \eqref{EqnOnIae}--\eqref{CouplConds}. Since the spectrum of $T_\eps$ is discrete, the solvability of $(T_\eps-\zeta)y=f$ for all $f\in L_2(r_\eps, \cI)$ ensures $\zeta \in \rho(T_\eps)$, and hence
$\rho(T_\eps)\subset\rho(\Ae)$.
\end{proof}

\section{Norm Resolvent Convergence of $\Ae$}
In this section we will prove that the family of operators $\Ae$ converges in the norm resolvent sense as $\eps\to 0$.
Let $B$ be the restriction of $\Bo$ to the domain
\begin{equation*}
  \dom(B)=\left\{\psi\in \dom(\Bo)\colon \psi'(-1)=0,\;\: \psi'(1)=0\right\}.
\end{equation*}
We introduce the matrix  operator
\begin{equation*}
    \A=
    \begin{pmatrix}
        \mathring{A_a} & 0 & 0\\
        0 & B & 0\\
        0 & 0 & \mathring{A_b}
    \end{pmatrix}
\end{equation*}
in space $\cL$  acting on
\begin{equation*}
    \dom(\A)=\left\{(\phi_a, \psi, \phi_b)\in \dom(\Aao)\times \dom(B)\times \dom(\Abo)\colon               \phi_a(0)=\psi(-1),\:\;\phi_b(0)=\psi(1)\right\}.
\end{equation*}
This operator is associated with the eigenvalue problem
\begin{align}\label{LimitPrbYa}
        &-u''+qu=\lambda ru\quad\text{in }\in \Ia,
         \quad \ell_au=0,
         \\\label{LimitPrbW}
        &-w''=\lambda  hw,\quad \text{in }\in \cJ,
         \quad w'(-1)=0,\quad w'(1)=0,
         \\\label{LimitPrbYb}
        &-v''+qv=\lambda rv\quad\text{in }\in \Ib,
\quad \ell_bv=0,\\\label{LimitPrbY=W}
       &\quad u(0)=w(-1),\quad v(0)=w(1)
\end{align}
which can be regarded as \emph{the limit problem.}
The following assertion is one of the main results of this paper.

\begin{thm}\label{ThmResolventConvergence}
    The family of operators $\Ae$ converges to $\A$ as $\eps\to 0$ in the norm resolvent sense. In addition,
    \begin{equation}\label{ResolventEst}
      \|\rR(\Ae)-\rR(\A)\|\leq c\sqrt{\eps},
    \end{equation}
    the constant $c$ being independent of $\eps$.
\end{thm}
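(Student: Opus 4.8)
The plan is to fix a spectral point $\zeta$ with $\mathrm{Im}\,\zeta\neq 0$. By the preceding Proposition $\sigma(\Ae)=\sigma(T_\eps)\subset\Real$, so $\zeta\in\rho(\Ae)$ for every $\eps$, and since $\sigma(\A)\subset\Real$ as well (cf. Theorem~\ref{ThmSpA}), $\zeta\in\rho(\A)$; it then suffices to bound $\norm{\rR(\Ae)-\rR(\A)}$ at this single $\zeta$. The argument rests on two ingredients: a uniform bound $\norm{\rR(\Ae)}\le C$ for all small $\eps$, and a quantitative corrector that transplants $\rR(\A)F$ into $\dom(\Ae)$ with an $O(\eps)$ defect. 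For $F=(f_a,f_0,f_b)\in\cL$ I write $U=(U_a,\omega,U_b)=\rR(\A)F\in\dom(\A)$; reading off the three scalar equations in $(\A-\zeta)U=F$ and using $q,r\in L^\infty$ one gets the elliptic bound $\norm{U_a}_{W_2^2(\Ia)}+\norm{\omega}_{W_2^2(\cJ)}+\norm{U_b}_{W_2^2(\Ib)}\le C\norm{F}$, so in particular $U_a,U_b\in C^1$ with $C^1$-norms controlled by $\norm{F}$.

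For the corrector I leave the outer components untouched, since $U_a\in\dom(\Aao)$ and $U_b\in\dom(\Abo)$ already solve the outer equations on the whole of $\Ia$ and $\Ib$, and I repair only the coupling on $\cJ$. Let $\chi_\eps$ be the Hermite cubic on $\cJ$ determined by $\chi_\eps(-1)=U_a(-\eps)-U_a(0)$, $\chi_\eps(1)=U_b(\eps)-U_b(0)$, $\chi_\eps'(-1)=\eps U_a'(-\eps)$, $\chi_\eps'(1)=\eps U_b'(\eps)$, and set $\Pi_\eps U=(U_a,\omega+\chi_\eps,U_b)$. Because $U$ satisfies $U_a(0)=\omega(-1)$, $U_b(0)=\omega(1)$ and $\omega'(\pm1)=0$, the vector $\Pi_\eps U$ meets all four coupling conditions of $\dom(\Ae)$, so $\Pi_\eps U\in\dom(\Ae)$. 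As $U_a,U_b\in C^1$, each of the four data above is $O(\eps)\norm{F}$, whence $\norm{\chi_\eps}_{W_2^2(\cJ)}\le c\eps\norm{F}$. A direct computation, using $B\omega-\zeta\omega=f_0$ together with $\Bo_\eps=\Bo+\eps^2 q(\eps\,\cdot)/h$, shows that the first and third components of $(\Ae-\zeta)\Pi_\eps U-F$ vanish identically, while the middle one equals $g_\eps=\eps^2 h^{-1}q(\eps\,\cdot)(\omega+\chi_\eps)-h^{-1}\chi_\eps''-\zeta\chi_\eps$; hence $\norm{(\Ae-\zeta)\Pi_\eps U-F}_\cL=\norm{g_\eps}_{L_2(h,\cJ)}\le c\eps\norm{F}$.

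The main obstacle is the uniform resolvent bound, which cannot be read off from the distance to the spectrum, because $\Ae$ is non-self-adjoint and, through the $\eps$-scaled condition $\eps\phi_a'(-\eps)=\psi'(-1)$, its similarity to $T_\eps$ is not uniformly bounded. I intend to prove $\norm{\rR(\Ae)}\le C$ by a compactness argument: were it to fail along some $\eps_k\to0$, one could find $G_k\in\cL$ with $\norm{G_k}=1$ and $\norm{V_k}\To\infty$, where $V_k=\rR(\Ae)G_k$; normalising $W_k=V_k/\norm{V_k}=(w_a^k,w_0^k,w_b^k)$ gives $\norm{W_k}=1$ and $(\Ae-\zeta)W_k\To0$ in $\cL$. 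The scalar equations then bound the components uniformly in $W_2^2$, so along a subsequence $W_k$ converges in $\cL$ and in $C^1$ on $\Ia$, $\cJ$, $\Ib$ to some $W$. In the limit the potential $\eps^2 q(\eps\,\cdot)$ disappears, the coupling $\eps (w_a^k)'(-\eps)=(w_0^k)'(-1)$ forces the Neumann conditions $w_0'(\pm1)=0$, while $w_a^k(-\eps)=w_0^k(-1)$ passes to $w_a(0)=w_0(-1)$ (the slivers on $(-\eps,0)$ and $(0,\eps)$ carry $\cL$-mass $O(\eps)$ and drop out); hence $W\in\dom(\A)$ solves $(\A-\zeta)W=0$ with $\norm{W}=1$, contradicting $\zeta\in\rho(\A)$. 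Alternatively the bound can be obtained quantitatively by reducing the coupled system, through the exterior Dirichlet-to-Neumann maps of $\Aao-\zeta$ and $\Abo-\zeta$, to a Robin perturbation of size $O(\eps)$ of the Neumann problem for $B-\zeta$ on $\cJ$, which is uniformly solvable since $\zeta\in\rho(\A)$.

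Finally I assemble the pieces. From $\Pi_\eps U-\rR(\Ae)F=\rR(\Ae)\big((\Ae-\zeta)\Pi_\eps U-F\big)$ and the uniform bound one gets $\norm{\Pi_\eps U-\rR(\Ae)F}\le C\norm{g_\eps}_{L_2(h,\cJ)}\le c\eps\norm{F}$, whereas $\norm{\Pi_\eps U-U}_\cL=\norm{\chi_\eps}_{L_2(h,\cJ)}\le c\eps\norm{F}$. Adding these and recalling $U=\rR(\A)F$ yields $\norm{\rR(\Ae)F-\rR(\A)F}_\cL\le c\eps\norm{F}$ for every $F$, that is $\norm{\rR(\Ae)-\rR(\A)}\le c\eps\le c\sqrt{\eps}$ once $\eps\le1$, which is \eqref{ResolventEst} and gives norm resolvent convergence at $\zeta$, and hence for all $\zeta\in\rho(\A)$.
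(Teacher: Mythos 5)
Your proof is correct, but it follows a genuinely different route from the paper's. The paper constructs $\rR(\Ae)$ explicitly as a product $\cH_\eps(\zeta)^{-1}\mathcal{R}_\eps(\zeta)$, where $\mathcal{R}_\eps(\zeta)=\mathrm{diag}\{\rR(A^\eps_a),\rR(B_\eps),\rR(A^\eps_b)\}$ and $\cH_\eps(\zeta)$ is a matrix built from the auxiliary solution operators $T^\eps_{a,b}(\zeta)$ and $\eps S^\eps_{a,b}(\zeta)$; it then proves norm convergence of each factor (Propositions~\ref{PropAmuEpsAB} and \ref{PropTaTbSaSb}), obtains uniform invertibility of $\cH_\eps(\zeta)$ from its $O(\eps)$ proximity to the explicitly invertible limit $\cH(\zeta)$, and multiplies the estimates. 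You instead build a quasimode: you transplant $U=\rR(\A)F$ into $\dom(\Ae)$ by adding a Hermite-cubic corrector on $\cJ$ with $O(\eps)$ data, compute an $O(\eps)\norm{F}$ defect, and combine this with a uniform a priori bound on $\norm{\rR(\Ae)}$ obtained by a compactness--contradiction argument. Both ingredients check out: the corrector does satisfy all four coupling conditions because $\omega'(\pm1)=0$ and $U_a(0)=\omega(-1)$, $U_b(0)=\omega(1)$; the outer components need no correction precisely because $\Aao$, $\Abo$ act on all of $\Ia$, $\Ib$; and the contradiction argument is sound since the three scalar equations give uniform $W_2^2$ bounds on fixed intervals, hence $C^1$-compactness, and the $\eps$-scaled Neumann data force $w_0'(\pm1)=0$ in the limit. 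Two remarks. First, your rate $O(\eps)$ is sharper than the paper's $O(\sqrt{\eps})$ and is consistent with it: the paper's $\sqrt{\eps}$ comes from estimating $|u(\eps)|$ via $\|u\|_{W_2^1}$ rather than via the $C^1$ bound from $W_2^2$, so the loss there appears to be an artifact of the estimate, not of the problem. Second, the paper's factorization is not wasted effort even if less sharp here --- the explicit formula \eqref{ResolventA} for $\rR(\A)$ that it produces is exactly what drives the spectral analysis in Theorem~\ref{ThmSpA}, and your argument also silently relies on $\zeta\in\rho(\A)$, which is most cleanly seen from that formula; your compactness step is the one non-quantitative link in your chain, though you correctly note it can be made quantitative via Dirichlet-to-Neumann reduction.
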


For the convenience of the reader we collect together the definitions
of all ope\-ra\-tors which will be used in the proof.
\begin{itemize}
\item
\textit{Operators $T_a^\eps(\zeta)$, $T_b^\eps(\zeta)$, $T_a(\zeta)$ and $T_b(\zeta)$.}
We endow  $\dom(\Bo)$ with the graph norm, i.e., the norm of the Sobolev space $W_2^2(\cJ)$.
Let   $T_a^\eps(\zeta)\colon \dom(\Bo)\to L_2(r,\Ia)$ be defined as follows.
Given $\zeta\in \mathbb{C}\setminus\Real$ and $\psi\in \dom(\Bo)$, we compute  $\psi(-1)$, find then a unique solution $u_a$ of the problem
\begin{equation}\label{PrbIaON}
 -u''+qu-\zeta r u=0\quad \text{in } \Ia,\qquad
 \ell_au=0,\quad u(-\eps)=\psi(-1)
\end{equation}
and finally set $T_a^\eps(\zeta)\psi=u_a$. Similarly, we define  $T_b^\eps(\zeta)\colon \dom(\Bo)\to L_2(r,\Ib)$ which solves the problem
\begin{equation}\label{PrbIbON}
 -v''+qv-\zeta r v=0\quad \text{in } \Ib,\qquad
  v(\eps)=\psi(1),\quad \ell_bv=0
\end{equation}
for given $\psi\in \dom(\Bo)$. Next, the operators $T_a(\zeta)$ and $T_b(\zeta)$
stand for $T_a^\eps(\zeta)$ and $T_b^\eps(\zeta)$, provided $\eps=0$. So $T_a(\zeta)$ (resp. $T_b(\zeta)$)  solves  problem \eqref{PrbIaON} (resp. \eqref{PrbIbON}) for given $\psi\in \dom(\Bo)$ and $\eps=0$.

\item
\textit{Operators $S_a^\eps(\zeta)$ and $S_b^\eps(\zeta)$.}
Suppose that $\dom(\Aao)$ and $\dom(\Abo)$ are equipped by the graph norms. These norms are equivalent to  the norms of $W_2^2(\Ia)$ and $W_2^2(\Ib)$ respectively. The operator $S_a^\eps(\zeta)\colon \dom(\Aao)\to L_2( h,\cJ)$ is defined by $S_a^\eps(\zeta)\phi=w_a$, where $w_a$ is a unique solution of
\begin{equation}\label{ProblemForSmuA}
  -w''+\eps^2 q(\eps\,\cdot) w=\zeta  h w\quad \text{in }  \cJ, \quad w'(-1)=\phi'(-\eps), \;\; w'(1)=0
\end{equation}
for given $\phi\in \dom(\Abo)$  and $\zeta\in \mathbb{C}\setminus\Real$. Similarly, operator $S_b^\eps(\zeta)\colon \dom(\Abo)\to L_2( h,\cJ)$ solves
\begin{equation}\label{ProblemForSmuB}
     -w''+\eps^2 q(\eps\,\cdot) w=\zeta  h w\quad \text{in }  \cJ, \qquad w'(-1)=0, \;\; w'(1)=\phi'(\eps)
  \end{equation}
for some $\phi\in \dom(\Aao)$ and $\zeta\in \mathbb{C}\setminus\Real$.

\item
\textit{Operator $B_\eps$.}
This operator is the restriction of $\Bo_\eps$ to the domain
\begin{equation*}
\dom(B_\eps)=\left\{\psi\in\dom(\Bo_\eps)\colon \psi'(-1)=0, \;\; \psi'(1)=0\right\}.
\end{equation*}

\item
\textit{Operators $A^\eps_a$, $A^\eps_b$, $A_a$ and $A_b$.}
Let  $A^\eps_a$ and  $A^\eps_b$ be the restrictions of $\Aao$ and $\Abo$ respectively to the domains $\dom(A^\eps_a)=\{\phi\in\dom(\Aao)\colon \phi(-\eps)=0\}$, $\dom(A^\eps_b)=\{\phi\in\dom(\Abo)\colon \phi(\eps)=0\}$.
The operators $A_a$ and $A_b$ stand for $A_a^\eps$ and $A_b^\eps$, provided $\eps=0$.
\end{itemize}

We now construct the resolvents of $\Ae$ and  $\A$ in the explicit form as follows. Fix $\zeta \in \mathbb{C}\setminus \Real$. First of all, note that
operators $T_a^\eps(\zeta)$, $T_b^\eps(\zeta)$, $S_a^\eps(\zeta)$ and $S_b^\eps(\zeta)$ are well-defined for such values of $\zeta$.
Moreover these operators are compact.
Given
$F=(f_a, f_0, f_b)\in \cL$, solve the equation $(\Ae-\zeta)Y=F$.
The first component  of   $Y=(\phi_a, \psi, \phi_b)$ is a solution of the Dirichlet type problem
\begin{equation*}
   -\phi''+q\phi-\zeta r \phi= r f_a\quad \text{in } \Ia,\qquad
   \ell_a\phi=0,\quad \phi(-\eps)=\psi(-1).
\end{equation*}
This solution can be represented as the sum of a solution of the non-homogeneous equation  subject to the homogeneous boundary  conditions and a solution of  \eqref{PrbIaON}:
\begin{equation}\label{YaRepres}
     \phi_a=\rR(A^\eps_a)f_a+T_a^\eps(\zeta)\psi.
\end{equation}
The same argument yields
\begin{equation}\label{YbRepres}
     \phi_b=\rR(A^\eps_b)f_b+T_b^\eps(\zeta)\psi.
\end{equation}
The middle element $\psi$ of $Y$ is a solution of the Neumann type problem
\begin{equation*}
     -\psi''+\eps^2 q(\eps \,\cdot) \psi-\zeta  h \psi= h f_0\quad \text{in }  \cJ, \quad \psi'(-1)=\eps \phi_a'(-\eps), \quad \psi'(1)=\eps \phi_b'(\eps),
  \end{equation*}
and it can be written as
\begin{equation}\label{YRepres}
    \psi=\rR(B_\eps)f_0+\eps S_a^\eps(\zeta)\phi_a+\eps S_b^\eps(\zeta)\phi_b.
\end{equation}
Then \eqref{YaRepres}--\eqref{YRepres} taken together yield
\begin{align*}
        \phi_a-T_a^\eps(\zeta)\psi=\rR(A^\eps_a)f_a,\\
      -\eps S_a^\eps(\zeta)\phi_a +\psi-\eps S_b^\eps(\zeta)\phi_b=\rR(B_\eps)f_0,\\
        -T_b^\eps(\zeta)\psi+\phi_b=\rR(A^\eps_b)f_b.
\end{align*}
It follows that the resolvent of $\Ae$ has the form
\begin{equation}\label{ResolventOfAe}
\rR(\Ae)=\cH_\eps(\zeta)^{-1} \mathcal{R}_\eps(\zeta),
\end{equation}
where
\begin{gather}\label{OprCReps}
\mathcal{R}_\eps(\zeta)=\begin{pmatrix}
              \rR(A^\eps_a)         & 0 & 0\\
    0 &         \rR(B_\eps)     & 0\\
         0              & 0 &   \rR(A^\eps_b)
\end{pmatrix},
\\\label{OprCHeps}
\cH_\eps(\zeta)=\begin{pmatrix}
         E              & -T_a^\eps(\zeta) & 0\\
    -\eps S_a^\eps(\zeta) &        E       & -\eps S_b^\eps(\zeta)\\
         0              & -T_b^\eps(\zeta) & E
\end{pmatrix},
\end{gather}
and $E$ denotes the identity operator in the corresponding spaces.
We shall prove below that  $\cH_\eps(\zeta)$ is invertible for $\eps$ small enough.

Now we consider  the equation $(\A-\zeta)Y=F$ for $F\in \cL$. In the coordinate representation we have $(\Aao-\zeta)\phi_a=f_a$, $(B-\zeta)\psi=f_0$ and $(\Abo-\zeta)\phi_b=f_b$,
where $Y=(\phi_a,\psi,\phi_b)$ and $F=(f_a,f_0,f_b)$.
Obviously,  $\psi=\rR(B)f_0$. The functions $\phi_a$ and $\phi_b$ are solutions of the problems
\begin{align*}
  &-\phi''+q\phi-\zeta r \phi= r f_a\quad \text{in } \Ia,\qquad
   \ell_a\phi=0,\quad \phi(0)=\psi(-1);\\
   &-\phi''+q\phi-\zeta r \phi= r f_b\quad \text{in } \Ib,\qquad
  \phi(0)=\psi(-1),\quad \ell_b\phi=0
\end{align*}
respectively. By reasoning similar to that for \eqref{YaRepres} and  \eqref{YbRepres}, we find
\begin{equation*}
    \phi_a=\rR(A_a)f_a+T_a(\zeta)\rR(B)f_0,\qquad
    \phi_b=\rR(A_b)f_b+T_b(\zeta)\rR(B)f_0.
\end{equation*}
Hence the resolvent of $\A$ can be written in the form
\begin{equation}\label{ResolventA}
    \rR(\A)=
    \begin{pmatrix}
        {\rm R}_\zeta(A_a) & T_a(\zeta)\rR(B) & 0\\
              0          & \phantom{T_a}\rR(B)        & 0\\
              0          & T_b(\zeta)\rR(B) & {\rm R}_\zeta(A_b)
    \end{pmatrix}.
\end{equation}

To compare the resolvents of $\Ae$ and $\A$, we need some auxiliary  assertions.

\begin{prop}\label{PropAmuEpsAB}
The operators  $A^\eps_a$, $A^\eps_b$ and $B_\eps$ converge as $\eps\to 0$
to $A_a$, $A_b$ and $B$ respectively  in the norm resolvent sense. Moreover
\begin{gather}\label{EstRAeabMinusRAab}
    \left\|\rR(A^\eps_a)-\rR(A_a)\right\|\leq C_1\sqrt{\eps}, \qquad
    \left\|\rR(A^\eps_b)-\rR(A_b)\right\|\leq C_2\sqrt{\eps},\\\label{EsrRBeMinusRB}
    \left\|\rR(B_\eps)-\rR(B)\right\|\leq C_3 \eps^2,
\end{gather}
where the constants $C_k$ do not depend on $\eps$.
\end{prop}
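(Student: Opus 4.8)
The plan is to prove each of the three norm-resolvent estimates separately, since the operators $A_a^\eps$, $A_b^\eps$ and $B_\eps$ act on decoupled spaces. For the $B_\eps$ estimate, which I expect to be the easiest, I would note that $B_\eps$ and $B$ share the same domain $\dom(B)$ and differ only by the bounded multiplication operator $\eps^2 q(\eps\,\cdot)h^{-1}$. Writing the second resolvent identity
\begin{equation*}
  \rR(B_\eps)-\rR(B)=-\eps^2\,\rR(B_\eps)\,\frac{q(\eps\,\cdot)}{h}\,\rR(B),
\end{equation*}
and using that $q,h\in L^\infty$ with $h$ uniformly positive, together with the uniform bound $\|\rR(B_\eps)\|\le |\mathrm{Im}\,\zeta|^{-1}$ valid for self-adjoint $B_\eps$ and the fixed $\zeta\notin\Real$, yields immediately $\|\rR(B_\eps)-\rR(B)\|\le C_3\eps^2$. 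This explains the faster rate $\eps^2$ in \eqref{EsrRBeMinusRB}.

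The two estimates in \eqref{EstRAeabMinusRAab} are genuinely different because $A_a^\eps$ and $A_a$ have \emph{different domains}: $\dom(A_a^\eps)$ imposes $\phi(-\eps)=0$ while $\dom(A_a)$ imposes $\phi(0)=0$. So the second resolvent identity does not apply directly, and the slower rate $\sqrt{\eps}$ reflects this domain mismatch. I would treat only $A_a^\eps$, the case of $A_b^\eps$ being symmetric. Given $f_a\in L_2(r,\Ia)$, set $u^\eps=\rR(A_a^\eps)f_a$ and $u=\rR(A_a)f_a$; both solve $-u''+qu-\zeta r u=rf_a$ in $\Ia$ with $\ell_a u=0$, but $u^\eps(-\eps)=0$ whereas $u(0)=0$. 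The difference $g=u^\eps-u$ then solves the homogeneous equation $-g''+qg-\zeta r g=0$ in $\Ia$ with $\ell_a g=0$ and the single inhomogeneous boundary datum $g(-\eps)=u^\eps(-\eps)-u(-\eps)=-u(-\eps)$. Thus $g=-u(-\eps)\,\theta_\eps$, where $\theta_\eps$ is the solution of the homogeneous problem normalized by $\theta_\eps(-\eps)=1$ (this is essentially $T_a^\eps(\zeta)$ applied to a fixed-value datum, an operator already available from the excerpt and uniformly bounded as $\eps\to 0$).

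It remains to estimate the scalar factor $u(-\eps)$. Since $u(0)=0$ and $u\in W_2^2(\Ia)$, the fundamental theorem of calculus gives
\begin{equation*}
  u(-\eps)=u(-\eps)-u(0)=-\int_{-\eps}^{0}u'(s)\,ds,
\end{equation*}
so by Cauchy--Schwarz $|u(-\eps)|\le \sqrt{\eps}\,\|u'\|_{L_2(-\eps,0)}\le \sqrt{\eps}\,\|u'\|_{L_2(\Ia)}$. The Sobolev bound $\|u'\|_{L_2(\Ia)}\le C\|u\|_{W_2^2(\Ia)}\le C'\|f_a\|_{L_2(r,\Ia)}$ (the latter by boundedness of $\rR(A_a)$ into the graph norm) then gives $|u(-\eps)|\le C''\sqrt{\eps}\,\|f_a\|$. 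Combining with the uniform bound on $\theta_\eps$ in $L_2(r,\Ia)$ produces $\|g\|=\|\rR(A_a^\eps)f_a-\rR(A_a)f_a\|\le C_1\sqrt{\eps}\,\|f_a\|$, which is \eqref{EstRAeabMinusRAab}; the case $A_b^\eps$ is identical with the roles of $0$ and $\eps$ interchanged. The main obstacle is to verify that $\theta_\eps$ (equivalently the relevant boundary-layer solution) is bounded in $L_2(r,\Ia)$ \emph{uniformly in $\eps$} as $\eps\to 0$; this requires that $\zeta$ stays off the real axis so that the homogeneous problem with a one-sided boundary value is uniformly solvable, and a short continuity-in-$\eps$ argument for the solution $\theta_\eps$ of the fixed ODE on the shrinking-by-$\eps$ interval. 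Once that uniform bound is in hand, everything else is the elementary integral estimate above, and the $\sqrt{\eps}$ rate is seen to be sharp, arising solely from the $O(\sqrt{\eps})$ smallness of the mismatched boundary value.
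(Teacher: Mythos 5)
Your proposal is correct and follows essentially the same route as the paper: for $A_{a}^{\eps}$, $A_b^\eps$ you write the difference of resolvents as the boundary mismatch $u(\mp\eps)$ (of size $O(\sqrt{\eps})$ by the fundamental theorem of calculus, Cauchy--Schwarz and the graph-norm boundedness of the resolvent) times the normalized homogeneous solution, exactly as the paper does with its function $z$ normalized at $0$; and your uniform-in-$\eps$ bound on that solution is the paper's observation that $z(\eps)\to z(0)=1$. The only cosmetic difference is in the $B_\eps$ estimate, where you invoke the second resolvent identity and the self-adjoint bound $\|\rR(B_\eps)\|\le|\mathrm{Im}\,\zeta|^{-1}$ while the paper obtains the uniform resolvent bound by an absorption argument; both are valid and yield the same $O(\eps^2)$ rate.
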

\begin{proof}
Fix $\zeta\in \mathbb{C}\setminus\mathbb{R}$ and let compare the elements $u_\eps=\rR(A^\eps_b)f$ and $u=\rR(A_b)f$ for given $f\in L_2(r,\Ib)$.
Since $u_\eps$ and $u$ solve the problems
\begin{align*}
    &-u_\eps''+qu_\eps-\zeta r u_\eps=rf\quad \text{in } \Ib,\qquad u_\eps(\eps)=0,\quad \ell_bu_\eps=0;\\
    &-u''+qu-\zeta r u=rf\quad \text{in } \Ib,\qquad\quad u(0)=0,\quad
    \ell_bu=0,
\end{align*}
they are related by  equality
\begin{equation*}
   u_\eps(x)=u(x)-\frac{u(\eps)}{z(\eps)}\,z(x),\qquad x\in \Ib,
\end{equation*}
where $z$ is a solution of the problem
\begin{equation}\label{PrbW}
  -z''+qz-\zeta r z=0\quad \text{in } \Ib,\qquad z(0)=1, \quad \ell_bz=0.
\end{equation}
Obviously, $z(\eps)$ is different from zero for $\eps$ small enough.  Then we have
\begin{equation*}
  \|u_\eps-u\|_{L_2(r,\Ib)}\leq \frac{|u(\eps)|}{|z(\eps)|}\,\|z\|_{L_2(r,\Ib)}
    \leq c_1 |u(\eps)|\leq c_2 \sqrt{\eps}\,\|u\|_{W_2^1(\Ib)},
\end{equation*}
because $z(\eps)\to 1$ as $\eps\to 0$ and
\begin{equation*}
    |u(\eps)|=\left|\int_0^\eps u'(x)\,dx\right|\leq c_3 \sqrt{\eps}\,\|u\|_{W_2^1(\Ib)}.
\end{equation*}
Observe that $\rR(A_b)$ is a bounded operator from $L_2(r,\Ib)$ to the domain of $A_b$ equipped with the graph norm. Since the domain is a subspace of $W_2^1(\Ib)$, there exists a constant
$c_4$ independent of $f$ such that
\begin{equation*}
    \|u\|_{W_2^1(\Ib)}\leq c_4\|f\|_{L_2(r,\Ib)}.
\end{equation*}
Therefore $\left\|\bigl(\rR(A^\eps_b)-\rR(A_b)\bigr)f\right\|_{L_2(r,\Ib)}\leq C_2\sqrt{\eps}\,\|f\|_{L_2(r,\Ib)}$, which establishes  the norm resolvent convergence $A^\eps_b\to A_b$ as $\eps\to 0$ and the corresponding estimate in \eqref{EstRAeabMinusRAab}. The proof for the operators $A^\eps_a$ is similar to that just given.

We now turn to the operators $B_\eps$ and first we establish that $\|\rR(B_\eps)\|\leq c$ for all $\eps$ small enough. Given $g\in L_2( h,\cJ)$, consider $w_\eps=\rR(B_\eps)g$ which solves
\begin{equation*}
     -w_\eps''+\eps^2 q(\eps \,\cdot) w_\eps-\zeta  h w_\eps= h g\quad \text{in }  \cJ, \quad w_\eps'(-1)=0, \quad w_\eps'(1)=0.
  \end{equation*}
Recall that  $q$ and $h$ are bounded in $\cI$ and $\cJ$ respectively,
and $h$ is uniformly positive on $\cI$. Then we have
\begin{multline*}
  \|\rR(B_\eps)g\|_{L_2( h,\cJ)}=\left\|\rR(B)\big(g-\eps^2 q(\eps \,\cdot)h^{-1} w_\eps\big)\right\|_{L_2( h,\cJ)}
  \leq\|\rR(B)g\|_{L_2( h,\cJ)}\\
  +\eps^2\|q\|_{L^\infty(\cI)}\,\|h^{-1}\|_{L^\infty(\cJ)}\,\|w_\eps\|_{L_2( h,\cJ)}\leq c_0\|g\|_{L_2( h,\cJ)}+c_1\eps^2\|\rR(B_\eps)g\|_{L_2( h,\cJ)}
\end{multline*}
and therefore
\begin{equation}\label{EstRBe}
  \|\rR(B_\eps)g\|_{L_2( h,\cJ)}\leq\frac{c_0}{1-c_1\eps^2}\,\|g\|_{L_2( h,\cJ)}\leq c\|g\|_{L_2( h,\cJ)}
\end{equation}
if $\eps$ is small enough.

Next, we set $w=\rR(B)g$. Then the difference $s_\eps=w_\eps-w$ solves the problem
\begin{equation*}
     -s_\eps''-\zeta  h s_\eps= -\eps^2 q(\eps \,\cdot) w_\eps\quad \text{in }  \cJ, \quad s_\eps'(-1)=0, \quad s_\eps'(1)=0.
  \end{equation*}
Hence in view of \eqref{EstRBe} we deduce
\begin{multline*}
  \|(\rR(B_\eps)-\rR(B))g\|_{L_2( h,\cJ)}= \|s_\eps\|_{L_2( h,\cJ)}\leq c_2\eps^2\|w_\eps\|_{L_2( h,\cJ)}\\=c_2\eps^2\|\rR(B_\eps)g\|_{L_2( h,\cJ)}\leq c_3\eps^2\|g\|_{L_2( h,\cJ)},
\end{multline*}
which finishes the proof.
\end{proof}

\begin{prop}\label{PropTaTbSaSb}
(i)
For each $\zeta\in \mathbb{C}\setminus\mathbb{R}$, we have the bounds
 \begin{equation*}
   \|T_a^\eps(\zeta)-T_a(\zeta)\|\leq c\eps, \qquad \|T_b^\eps(\zeta)-T_b(\zeta)\|\leq c\eps,
 \end{equation*}
the constant $c$ being independent of $\eps$.

(ii) There exists  constant $C$  such that
\begin{equation*}
 \|S_a^\eps(\zeta)\|+\|S_b^\eps(\zeta)\|\leq C
\end{equation*}
for all $\eps$ small enough.
\end{prop}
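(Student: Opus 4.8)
The plan is to handle the two parts separately, exploiting in each case that the whole $\eps$-dependence is confined to a single boundary datum.

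For part (i), I would first note that, for $\zeta\in\mathbb{C}\setminus\mathbb{R}$, the solutions of the homogeneous equation $-z''+qz-\zeta rz=0$ in $\Ia$ subject to $\ell_a z=0$ form a one-dimensional subspace; I fix a non-trivial generator $z_a$, which is manifestly independent of $\eps$. Since $\zeta$ is not an eigenvalue of the self-adjoint operator $A_a$ (whose spectrum is real), $z_a(0)\neq 0$, and by continuity $z_a(-\eps)$ stays bounded away from zero for $\eps$ small. Both $T_a^\eps(\zeta)\psi$ and $T_a(\zeta)\psi$ lie in $\spn\{z_a\}$ and are pinned down by the conditions $u(-\eps)=\psi(-1)$ and $u(0)=\psi(-1)$ respectively, so
\[
T_a^\eps(\zeta)\psi=\frac{\psi(-1)}{z_a(-\eps)}\,z_a,\qquad T_a(\zeta)\psi=\frac{\psi(-1)}{z_a(0)}\,z_a .
\]
Their difference equals $\psi(-1)\bigl(z_a(-\eps)^{-1}-z_a(0)^{-1}\bigr)z_a$, and because $z_a\in W_2^2(\Ia)\subset C^1(\overline{\Ia})$ gives $|z_a(-\eps)-z_a(0)|\le c\eps$, the scalar prefactor is $O(\eps)$. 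Bounding the trace by $|\psi(-1)|\le c\,\|\psi\|_{W_2^2(\cJ)}$ (the norm of $\dom(\Bo)$) then yields $\|T_a^\eps(\zeta)-T_a(\zeta)\|\le c\eps$. The estimate for $T_b$ follows by the same argument on $\Ib$.

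For part (ii), I would reduce the inhomogeneous Neumann problem to the resolvent of $B_\eps$, whose uniform boundedness $\|\rR(B_\eps)\|\le c$ was already obtained as \eqref{EstRBe} in the proof of Proposition~\ref{PropAmuEpsAB}. Fix once and for all a smooth $\chi$ on $\cJ$ with $\chi'(-1)=1$ and $\chi'(1)=0$, and write $w_a=S_a^\eps(\zeta)\phi$ together with $g=\phi'(-\eps)$. Then $v:=w_a-g\chi$ satisfies the homogeneous Neumann conditions, so $v\in\dom(B_\eps)$, and a direct computation using $\Bo_\eps w_a=\zeta w_a$ gives $(B_\eps-\zeta)v=g(\zeta\chi-\Bo_\eps\chi)$, whence $v=g\,\rR(B_\eps)(\zeta\chi-\Bo_\eps\chi)$. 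Since $q\in L^\infty$, the right-hand side $\zeta\chi-\Bo_\eps\chi$ is bounded in $L_2(h,\cJ)$ uniformly in $\eps$, so the uniform resolvent bound yields $\|v\|\le c|g|$ and hence $\|w_a\|\le c|g|$. Finally the trace estimate $|g|=|\phi'(-\eps)|\le c\,\|\phi'\|_{W_2^1(\Ia)}\le c\,\|\phi\|_{W_2^2(\Ia)}$, valid uniformly in $\eps$ because $-\eps\in\overline{\Ia}$, produces $\|S_a^\eps(\zeta)\phi\|_{L_2(h,\cJ)}\le C\,\|\phi\|_{W_2^2(\Ia)}$, and the $W_2^2(\Ia)$-norm is equivalent to the graph norm of $\Aao$; the bound for $S_b^\eps$ is symmetric.

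The routine ingredients (the explicit one-dimensional solution, integration by parts, the one-dimensional Sobolev trace) are harmless, so I expect the only real work to be the uniformity bookkeeping. In (i) this amounts to the non-degeneracy of $z_a(-\eps)$ as $\eps\to 0$, which rests squarely on $\zeta\notin\sigma(A_a)\subset\mathbb{R}$. In (ii) the delicate points are that the auxiliary cutoff $\chi$ can be chosen independently of $\eps$ and that the resolvent bound is the $\eps$-uniform one just proved; the remaining thing to verify carefully is that the correction $\eps^2 q(\eps\,\cdot)\chi$ genuinely stays uniformly bounded in $L_2(h,\cJ)$ despite the rescaled argument, which it does since $q\in L^\infty(\cI)$. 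Once these $\eps$-independent constants are in place, both estimates follow.
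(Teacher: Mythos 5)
Your argument is correct and takes essentially the same route as the paper: part (i) is the same explicit representation of $T_b^\eps(\zeta)\psi$ and $T_b(\zeta)\psi$ as scalar multiples of a fixed ($\eps$-independent) solution $z$ of the homogeneous problem, with the $O(\eps)$ gain coming from $z\in C^1$ and the Sobolev trace bound on $\psi$. For part (ii) the paper merely asserts $\|w_\eps\|_{L_2(h,\cJ)}\le c\,|\phi'(\pm\eps)|$ from the uniform bound on $\rR(B_\eps)$; your cutoff $\chi$ lifting the Neumann datum is exactly the standard justification of that assertion, so it is the same proof with the details filled in.
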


\begin{proof} \textit{(i)} Let us show that $T_b^\eps(\zeta)$ converge to $T_b(\zeta)$ in the norm  as $\eps\to 0$. The same proof remains valid for $T_a^\eps(\zeta)$.  Suppose that $u_\eps=T_b^\eps(\zeta)\psi$ is a solution of \eqref{PrbIbON} for given $\psi\in \dom(\Bo)$. It is easily seen that
\begin{equation*}
    u_\eps(x)=\frac{\psi(1)}{z(\eps)}\,z(x),\quad x\in\Ia,
\end{equation*}
where $z$ is defined by~\eqref{PrbW}. If $u=T_b(\zeta)\psi$, then we  have $u=\psi(1)z$.
Hence
   \begin{multline*}
    \|(T_b^\eps(\zeta)-T_b(\zeta))\psi\|_{L_2(r,\Ib)}
    =\left\|\frac{\psi(1)}{z(\eps)}\,z
    -\psi(1)z\right\|_{L_2(r,\Ib)}
    \\\leq \left|\frac{z(\eps)-1}{z(\eps)}\right|\,|\psi(1)|\,\|z\|_{L_2(r,\Ib)}
    \leq
    c_1\eps\,\|\psi\|_{\dom(\Bo)},
   \end{multline*}
because $z$ belongs to  $C^1(\Ib)$  and $z(0)=1$.
Recall also  that $\dom(\Bo)=W_2^2(\cJ)$ and hence
$\|\psi\|_{C(\cJ)}\leq C\|\psi\|_{\dom(\Bo)}$ by the Sobolev embedding theorem.

\textit{(ii)}
For each $\phi\in \dom(\Abo)$, the function $w_\eps=S_b^\eps(\zeta)\phi$ is a solution of \eqref{ProblemForSmuB} and satisfies the estimate
$\|w_\eps\|_{L_2( h,\cJ)}\leq c_2 \left|\phi'(\eps)\right|$
with a constant $c_2$  independent of $\eps$, since the resolvents $\rR(B_\eps)$ are uniformly bounded on $\eps$ by  Proposition~\ref{PropAmuEpsAB}. The trace operator $j_\eps\colon \dom(\Abo)\to \mathbb{C}$, $j_\eps \phi=\phi'(\eps)$, is also uniformly bounded on $\eps$. Therefore
\begin{equation*}
    \|S_b^\eps(\zeta)\phi\|_{L_2( h,\cJ)} =\|w_\eps\|_{L_2( h,\cJ)}\leq  C \|\phi\|_{W^2_2(\Ib)}.
\end{equation*}
The same proof works for $S_a^\eps(\zeta)$.
\end{proof}

We are now in a position to prove Theorem~\ref{ThmResolventConvergence}.
In view of Proposition~\ref{PropTaTbSaSb}, we conclude that the family of matrix operators $\cH_\eps(\zeta)$, given by \eqref{OprCHeps}, converges as $\eps\to 0$ towards
\begin{equation*}
    \cH(\zeta)=\begin{pmatrix}
         E              & -T_a(\zeta) & 0\\
    0 &        E       & 0\\
         0              & -T_b(\zeta) & E
\end{pmatrix}
\end{equation*}
in the norm. Moreover $\|\cH_\eps(\zeta)-\cH(\zeta)\|\leq c_1\eps$.
Observe  that  $\cH(\zeta)$ is invertible and
\begin{equation*}
    \cH(\zeta)^{-1}=\begin{pmatrix}
         E              & T_a(\zeta) & 0\\
    0 &        E       & 0\\
         0              & T_b(\zeta) & E
\end{pmatrix}.
\end{equation*}
Therefore $\cH_\eps(\zeta)$ is also invertible for $\eps$  small enough, and
\begin{equation}\label{EstHinverse}
  \|\cH_\eps(\zeta)^{-1}-\cH(\zeta)^{-1}\|\leq c_2 \eps.
\end{equation}
Recalling \eqref{ResolventOfAe} and applying  Proposition~\ref{PropAmuEpsAB}, we deduce
\begin{multline*}
\rR(\Ae)=\cH_\eps(\zeta)^{-1} \mathcal{R}_\eps(\zeta)\\
=
\begin{pmatrix}
         E              & -T_a^\eps(\zeta) & 0\\
    -\eps S_a^\eps(\zeta) &        E       & -\eps S_b^\eps(\zeta)\\
         0              & -T_b^\eps(\zeta) & E
\end{pmatrix}^{-1}
\begin{pmatrix}
              \rR(A^\eps_a)         & 0 & 0\\
    0 &         \rR(B_\eps)     & 0\\
         0              & 0 &   \rR(A^\eps_b)
\end{pmatrix}
\\\to
\begin{pmatrix}
         E  & T_a(\zeta) & 0\\
         0  &        E   & 0\\
         0  & T_b(\zeta) & E
\end{pmatrix}
\begin{pmatrix}
    \rR(A_a)&0&0\\
    0&\rR(B)& 0\\
    0&0&\rR(A_b)
\end{pmatrix}
\\
=
    \begin{pmatrix}
        {\rm R}_\zeta(A_a) & T_a(\zeta)\rR(B) & 0\\
              0          & \phantom{T_a}\rR(B)        & 0\\
              0          & T_b(\zeta)\rR(B) & {\rm R}_\zeta(A_b)
    \end{pmatrix}= \rR(\A)  \quad\text{as } \eps\to 0,
\end{multline*}
by \eqref{ResolventA}.
Estimate \eqref{ResolventEst} follows from equality
\begin{equation*}
  \rR(\Ae)-\rR(\A)=\cH_\eps(\zeta)^{-1}(\mathcal{R}_\eps(\zeta)
  -\mathcal{R}(\zeta))
  -(\cH_\eps(\zeta)^{-1}-\cH(\zeta)^{-1})\mathcal{R}(\zeta)
\end{equation*}
and bounds \eqref{EstRAeabMinusRAab}, \eqref{EsrRBeMinusRB} and \eqref{EstHinverse}. Here $\mathcal{R}(\zeta)={\rm diag}\,\big\{\rR(A_a), \rR(B), \rR(A_b)\big\}$.

\section{Spectrum of $\A$}

The limit operator
\begin{equation*}
    \A=
    \begin{pmatrix}
        \mathring{A_a} & 0 & 0\\
        0 & B & 0\\
        0 & 0 & \mathring{A_b}
    \end{pmatrix}, \quad
    \begin{aligned}
        \dom(\A)=\Big\{(\phi_a, \psi,\phi_b)&\,\in \dom(\Aao)\times \dom(B)\times \dom(\Abo)\colon\\
               &\phi_a(0)=\psi(-1),\:\;\phi_b(0)=\psi(1)\Big\}.
    \end{aligned}
\end{equation*}
 constructed above  is non-self-adjoint. Direct computations show that the adjoint operator $\A^*$ in $\cL$ has the form
\begin{equation*}
    \A^*=
    \begin{pmatrix}
        A_a & 0 & 0\\
        0 & \mathring{B} & 0\\
        0 & 0 & A_b
    \end{pmatrix}, \;\;
    \begin{aligned}
       \dom(\A^*)=\Big\{(\phi_a, \psi, \phi_b)&\,\in \dom(A_a)\times \dom(\Bo)\times \dom(A_b)\colon\\
               &\phi_a'(0)=\psi'(-1),\:\;\phi_b'(0)=\psi'(1)\Big\}.
    \end{aligned}
\end{equation*}
In what follows we will denote by $u_\lambda$, $v_\lambda$ and $w_\lambda$ the eigenfunctions of $A_a$, $A_b$ and $B$ respectively which correspond to an eigenvalue $\lambda$. So $u_\lambda$, $v_\lambda$ and $w_\lambda$ are non-trivial solutions of the problems
\begin{align}\label{EVPrAa}
  &-u''+qu=\lambda r u\quad \text{in } \Ia,\qquad \ell_au=0,\quad u(0)=0;
   \\\label{EVPrAb}
  &-v''+qv=\lambda r v\quad \text{in } \Ib,\qquad v(0)=0,\quad
    \ell_bv=0;\\\label{EVPrB}
  &-w''=\lambda hw  \quad \text{in } \cJ, \qquad w'(-1)=0, \quad w'(1)=0
\end{align}
respectively. Let us normalize these eigenfunctions
by setting
\begin{equation}\label{Normalization}
\|u_\lambda\|_{L_2(r,\Ia)}=\|v_\lambda\|_{L_2(r,\Ib)}=\|w_\lambda\|_{L_2( h,\cJ)}=1.
\end{equation}
Denote also by $X_\lambda$ the root subspace of $\A$ for $\lambda$, that is
\begin{equation*}
  X_\lambda=\spn\left\{\ker (\A-\lambda)^k\colon k\in \mathbb{N}\right\}.
\end{equation*}
The eigenvectors and root vectors of a non-self-adjoint operator are also called genera\-li\-zed eigenvectors. So $X_\lambda$ is a subspace of the generalized eigenfunctions corresponding to the eigenvalue $\lambda$.

\begin{thm}\label{ThmSpA}
(i) The spectrum of $\A$ is real and discrete, and
   \begin{equation}\label{sigma(A)}
    \sigma(\A)=\sigma(A_a)\cup\sigma(B)\cup\sigma(A_b).
   \end{equation}

(ii)  If $\lambda$ belongs  to only one of the sets $\sigma(A_a)$, $\sigma(B)$ or $\sigma(A_b)$, then $\lambda$ is a simple eigenvalue of $\A$.

(iii) If  $\lambda\in\sigma(A_a)\cap \sigma(A_b)$, but $\lambda$ is not an eigenvalue of $B$, then $\lambda$ is a double eigenvalue and $X_\lambda=\ker (\A-\lambda E)$.

(iv)  Suppose that $\lambda$ belongs to $\sigma(A_a)\cap \sigma(B)$ (resp. $\sigma(A_b)\cap\sigma(B)$), but $\lambda$ is not an eigenvalue of $A_b$ (resp. $A_a$), then $\lambda$ is a double eigenvalue of $\A$.
Finally, if $\lambda\in\sigma(A_a)\cap \sigma(A_b)\cap \sigma(B)$, then $\lambda$ is an  eigenvalue of $\A$ with multiplicity $3$.
 In both the cases we have  $X_\lambda=\ker(\A-\lambda)^2$, but $X_\lambda\neq\ker(\A-\lambda)$.
\end{thm}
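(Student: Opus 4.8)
The plan is to read the spectrum off the explicit resolvent \eqref{ResolventA} and then to compute the root subspaces directly from the three decoupled ordinary differential equations. Two elementary facts drive everything and I would record them first. Since $w_\lambda$ solves \eqref{EVPrB}, the uniqueness theorem for the initial value problem forces $w_\lambda(-1)\neq0$ and $w_\lambda(1)\neq0$ (if, say, $w_\lambda(-1)=0=w_\lambda'(-1)$, then $w_\lambda\equiv0$). Second, for every $\lambda$ the problem $(\Aao-\lambda)\phi=g$, $\ell_a\phi=0$, is solvable for arbitrary $g\in L_2(r,\Ia)$: the single condition $\ell_a\phi=0$ cuts the two-dimensional solution space of the ODE down to a one-dimensional affine set, so $\Aao-\lambda$ is surjective with $\ker(\Aao-\lambda)=\spn\{p^a_\lambda\}$, where $p^a_\lambda$ is a nontrivial solution of $-p''+qp=\lambda rp$ in $\Ia$ with $\ell_a p=0$; moreover $p^a_\lambda(0)=0$ precisely when $\lambda\in\sigma(A_a)$, in which case $p^a_\lambda=u_\lambda$. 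The analogous statements hold on $\Ib$ with $p^b_\lambda$.

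For part (i) I would note that every entry of \eqref{ResolventA} is compact (the solving operators $T_a(\zeta),T_b(\zeta)$ are even of rank one) and that the formula is valid exactly for $\zeta\in\rho(A_a)\cap\rho(B)\cap\rho(A_b)$, since $\rR(A_a)$, $\rR(B)$, $\rR(A_b)$, $T_a(\zeta)$ and $T_b(\zeta)$ are all defined there. Hence $\rR(\A)$ is compact, $\sigma(\A)$ is discrete, and $\sigma(\A)\subseteq\sigma(A_a)\cup\sigma(B)\cup\sigma(A_b)\subseteq\Real$, so $\sigma(\A)$ is real. The reverse inclusion and the geometric multiplicities I would obtain by solving $(\A-\lambda)(\phi_a,\psi,\phi_b)=0$ componentwise: $\psi\neq0$ requires $\lambda\in\sigma(B)$ (then $\psi=\beta w_\lambda$), while $\phi_a=\alpha p^a_\lambda$ and $\phi_b=\gamma p^b_\lambda$, and the coupling conditions become $\alpha\,p^a_\lambda(0)=\beta\,w_\lambda(-1)$ and $\gamma\,p^b_\lambda(0)=\beta\,w_\lambda(1)$. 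A short case check using $p^a_\lambda(0)=0\Leftrightarrow\lambda\in\sigma(A_a)$ together with $w_\lambda(\pm1)\neq0$ then gives $\dim\ker(\A-\lambda)=1$ when $\lambda$ lies in exactly one of the three sets or in $\sigma(B)\cap\sigma(A_a)$ only (resp.\ $\sigma(B)\cap\sigma(A_b)$ only), and $\dim\ker(\A-\lambda)=2$ when $\lambda\in\sigma(A_a)\cap\sigma(A_b)$, with or without $\lambda\in\sigma(B)$. In each such case the kernel is nontrivial, which yields the inclusion $\supseteq$ and hence \eqref{sigma(A)}, and simultaneously settles the geometric parts of (ii)--(iv).

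The heart of the matter, and the main obstacle, is pinning down the algebraic multiplicities and the Jordan structure in (iii)--(iv); I would attack it from two sides. Analytically, the order of the pole of $\rR(\A)=(\A-\zeta)^{-1}$ at $\lambda$ equals the ascent of $\A-\lambda$, and from \eqref{ResolventA} this order is $1$ in cases (ii) and (iii) but $2$ whenever $\lambda\in\sigma(B)\cap(\sigma(A_a)\cup\sigma(A_b))$: the off-diagonal blocks $T_a(\zeta)\rR(B)$ and $T_b(\zeta)\rR(B)$ are then products of two factors with simple poles, and the nonvanishing of their leading Laurent coefficient is exactly the statement $w_\lambda(\pm1)\neq0$. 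This already forces $X_\lambda=\ker(\A-\lambda)$ in (iii) and $X_\lambda=\ker(\A-\lambda)^2\neq\ker(\A-\lambda)$ in (iv). Constructively, I would compute $\dim\ker(\A-\lambda)^2$ by solving $(\A-\lambda)Y\in\ker(\A-\lambda)$: writing $\phi_a=c_1\eta_a+\alpha u_\lambda$ with $(\Aao-\lambda)\eta_a=u_\lambda$ (solvable by the surjectivity above), the crucial point is $\eta_a(0)\neq0$, for otherwise $\eta_a\in\dom(A_a)$ and $(A_a-\lambda)\eta_a=u_\lambda$, contradicting $u_\lambda\perp\mathrm{ran}(A_a-\lambda)$ for the self-adjoint $A_a$; likewise $\eta_b(0)\neq0$. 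Substituting into the two coupling relations, and using $w_\lambda(\pm1)\neq0$, shows that the two relations are independent, so $\dim\ker(\A-\lambda)^2$ exceeds $\dim\ker(\A-\lambda)$ by exactly one: a single Jordan chain of length two. This gives $\dim\ker(\A-\lambda)^2=2$ in the double cases and $=3$ in the triple case $\lambda\in\sigma(A_a)\cap\sigma(B)\cap\sigma(A_b)$, which together with the ascent $2$ fixes the algebraic multiplicities as $2$ and $3$ respectively and exhibits the nontrivial Jordan cell asserted in (iv).
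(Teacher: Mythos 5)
Your proposal is correct and, at its core, follows the same route as the paper: part (i) is read off the explicit resolvent \eqref{ResolventA}, and parts (ii)--(iv) are settled by solving $(\A-\lambda)Y=Z$ componentwise and deciding the solvability of the resulting Sturm--Liouville problems. Your case analysis of the geometric multiplicities and your dimension count for $\ker(\A-\lambda)^2$ both check out. The two places where you argue differently are worth comparing. First, where the paper obtains the explicit constant $c_0=1/(w_\lambda(-1)u_\lambda'(0))$ in \eqref{ConstantC} by multiplying by $u_\lambda$ and integrating by parts, you observe that the particular solution $\eta_a$ of $(\Aao-\lambda)\eta_a=u_\lambda$, $\ell_a\eta_a=0$, must have $\eta_a(0)\neq 0$ because otherwise $u_\lambda\in\mathrm{ran}(A_a-\lambda)\perp\ker(A_a-\lambda)$; this is the same Green's identity in abstract form and is perfectly adequate. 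Second --- and this is the one step I would not let stand as written --- to cap the ascent at $2$ you invoke the order of the pole of $\rRz(\A)$ at $\lambda$, which requires that $T_a(\zeta)$ and $T_b(\zeta)$ have \emph{simple} poles on $\sigma(A_a)$ and $\sigma(A_b)$. That is true, but it is equivalent to the simplicity of the zeros of the characteristic function $\zeta\mapsto p^a_\zeta(0)$, which does not follow merely from the simplicity of the eigenvalues of $A_a$; it needs the standard Wronskian/derivative computation, which you do not supply. The paper closes the chain instead by the one-line Fredholm observation that a third link would force the middle component to solve $-w''-\lambda hw=chw_\lambda$ with Neumann conditions (problem \eqref{UnsolvProblem}), unsolvable for $c\neq 0$ since $B$ is self-adjoint. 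Your own componentwise setup already contains this fact (the parameter $\beta$ must vanish at the next step), so I would state it explicitly and treat the pole-order argument as a remark rather than as the justification of the ascent bound.
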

\begin{proof}
\textit{(i)}
Equality \eqref{sigma(A)}  follows directly from the explicit representation \eqref{ResolventA} of $\rR(\A)$. Indeed,  each of spectra $\sigma(A_a)$, $\sigma(A_b)$ and $\sigma(B)$ is contained in the spectrum of $\A$. If $\zeta$ does not  belongs to set $\sigma(A_a)\cup\sigma(A_b)\cup\sigma(B)$, then not only $\rR(A_a)$, $\rR(A_b)$, $\rR(B)$, but also $T_a(\zeta)$ and $T_b(\zeta)$ are bounded, because
in this case problems \eqref{PrbIaON} and \eqref{PrbIbON} for $\eps=0$ are uniquely solvable for all $\psi\in W_2^2(\cJ)$. Therefore operator $\rR(\A)$ is also bounded.
Operators $A_a$, $A_b$ and $B$  associated with  eigenvalue problems \eqref{EVPrAa}, \eqref{EVPrAb} and \eqref{EVPrB}   are self-adjoint and have compact resolvents. Consequently $\sigma(A)$ is real and discrete.

\textit{(ii)}
Observe that the spectra of $A_a$,  $A_b$ and $B$ are simple.
A trivial verification shows that if $\lambda$ belongs  to only one of the sets $\sigma(A_a)$, $\sigma(A_b)$ or $\sigma(B)$, then $\lambda$ is a simple eigenvalue of $\A$ with  eigenvector $(u_\lambda,0,0)$ if $\lambda\in \sigma(A_a)$, and $(0,0, v_\lambda)$ if $\lambda\in \sigma(A_b)$, and
$(T_a(\lambda)w_\lambda,w_\lambda,T_b(\lambda)w_\lambda)$ if $\lambda\in \sigma(B)$.

\textit{(iii)}
In the case $\lambda\in\sigma(A_a)\cap \sigma(A_b)$ and  $\lambda\not\in \sigma(B)$, there are two linearly independent eigenvectors $U=(u_\lambda,0,0)$ and $V=(0,0, v_\lambda)$.
Moreover, equation $(\A-\lambda)Y=c_1U+c_2V$ is unsolvable for any $c_1$ and $c_2$ such that $c_1^2+c_2^2\neq 0$. If for instance  $c_1$ is different from zero, then problem
\begin{equation}\label{ProblemIII}
     -u''+qu-\lambda r u=c_1r u_\lambda \quad \text{in }\Ia, \qquad\ell_a u=0,\quad u(0)=0
\end{equation}
has no solutions. Suppose, contrary to our claim,  that such solution exists. Then multiplying   equation  \eqref{ProblemIII} by $u_\lambda$ and integrating by parts yield $c_1\|u_\lambda\|^2_{L_2(r,\Ia)}=0$.
Therefore $X_\lambda=\ker(\A-\lambda)$ and $\dim X_\lambda=2$.

\textit{(iv)}
Suppose that $\lambda\in \sigma(A_a)\cap \sigma(B)$ and $\lambda\not\in \sigma(A_b)$. In this case there exists the eigenvector  $U=(u_\lambda,0,0)$. Furthermore, we will show that the equation $(\A-\lambda)U_*=U$ is solvable.  We are thus looking for a solution $U_*=(u,w,v)$ of
\begin{align}\label{UstarProblem}
     &-u''+qu-\lambda r u=r u_\lambda \quad \text{in }\Ia, &&\ell_a u=0,\quad u(0)=w(-1);\\\label{WstarProblem}
&-w''-\lambda  h w=0\quad \text{in }\cJ, && w'(-1)=0,\quad w'(1)=0;\\\label{VstarProblem}
    &-v''+qv-\lambda r v=0 \quad \text{in } \Ib, && v(0)=w(1),\quad
    \ell_b v=0.
\end{align}
Obviously, $w=c_0w_\lambda$ for some constant $c_0$, where $w_\lambda$ is a normalized eigenfunction of $B$. Then  \eqref{VstarProblem} admits a unique solution $v_*=c_0\,T_b(\lambda)w_\lambda$ for each $c_0$, since $\lambda\in \varrho(A_b)$. Next,   \eqref{UstarProblem} is in general unsolvable, since $\lambda$ is a point of $\sigma(A_a)$. But  we have the free parameter $c_0$ in the boundary condition; \eqref{UstarProblem} with the condition $u(0)=c_0 w_\lambda(-1)$ is solvable if and only if
\begin{equation}\label{ConstantC}
    c_0=\frac1{w_\lambda(-1)u'_\lambda(0)}.
\end{equation}
This equality can be easily obtained by multiplying the equation in  \eqref{UstarProblem} by $u_\lambda$ and integrating by parts. Remark that
both of the values $w_\lambda(-1)$ and $u'_\lambda(0)$ are different from zero. If $u_0$ is a solution of \eqref{UstarProblem}, then operator $\A$ has a root vector
$$
U_*=\left(u_0,c_0\,w_\lambda, c_0\,T_b(\lambda)w_\lambda\right),
$$
where $c_0$ is given by \eqref{ConstantC}.
Hence, the subspace $X_\lambda$ is a linear span of  the eigenvector $U$ and the root vector $U_*$. In addition, there are no other root vectors, because the equation $(\A-\lambda)Y=U_*$ leads to the problem
\begin{equation}\label{UnsolvProblem}
    -w''-\lambda  h w=chw_\lambda\quad \text{in }\cJ,\qquad w'(-1)=0,\quad w'(1)=0,
\end{equation}
which is unsolvable for $c\neq 0$.
The case $\lambda\in\sigma(A_b)\cap \sigma(B)$ and $\lambda\not\in\sigma(A_a)$ is treated similarly.

Now we suppose that $\lambda\in\sigma(A_a)\cap \sigma(A_b)\cap \sigma(B)$. Then the operator $\A$ has two  linearly independent eigenvectors $U=(u_\lambda,0,0)$ and $V=(0,0, v_\lambda)$. Note also that $\A$ has no eigenvectors $Y=(u,w,v)$, where $w$ is different from zero. In this case, values  $w(-1)$ and $w(1)$ are always different from zero and hence the problems for $u$ and $v$ are unsolvable.
We will prove that $X_\lambda=\ker(\A-\lambda)^2$ and $\dim X_\lambda=3$. Let us consider the equation $(\A-\lambda)Y=c_1 U+c_2 V$ with arbitrary constants $c_1$ and $c_2$,
that is to say,
 \begin{align}\label{UstarProblemM3}
    &-u''+qu-\lambda r u=c_1 r u_\lambda \quad \text{in } \Ia, \qquad \ell_au=0,\quad u(0)=w(-1);\\\label{WstarProblemM3}
&-w''-\lambda  h w=0\quad \text{in }\cJ, \quad\qquad w'(-1)=0,\quad w'(1)=0;\\\label{VstarProblemM3}
    &-v''+qv-\lambda r v=c_2 r v_\lambda \quad \text{in } \Ib, \qquad v(0)=w(1),\quad \ell_bv=0.
\end{align}
Reasoning as above,  we establish that $w=c_0w_\lambda$ and problems \eqref{UstarProblemM3} and \eqref{VstarProblemM3} admit solutions simultaneously if and only if the following equalities
\begin{equation*}
    c_1=c_0 w_\lambda(-1)u'_\lambda(0), \qquad c_2=-c_0 w_\lambda(1)v'_\lambda(0)
\end{equation*}
 hold. Then the conditions $c_0\neq 0$ and
 \begin{equation*}
   c_1=
   -\frac{w_\lambda(-1)u'_\lambda(0)}{w_\lambda(1)v'_\lambda(0)}\,c_2
 \end{equation*}
ensure the existence of a root vector $Y_*$ of $\A$. Furthermore there are no other root vectors, by reasoning similar to that in the previous case.
Hence the subspace $X_\lambda$ for  a triple eigenvalue $\lambda$ is generated by the eigenvectors $U$, $V$ and the root vector $Y_*$.
\end{proof}


\section{Convergence of Spectra}

Let us denote by $\lambda^\eps_1<\lambda^\eps_2<\cdots<\lambda^\eps_n<\cdots$
the eigenvalues of problem \eqref{PertPrbEq}--\eqref{PertPrbCondB}, i.e.,  the eigenvalues of  $\Ae$. Note that each eigenvalue $\lambda^\eps_n$ is simple. Let $\lambda_1\leq\lambda_2\leq\cdots\leq\lambda_n\leq\cdots$ be
the eigenvalues of limit problem \eqref{LimitPrbYa}--\eqref{LimitPrbY=W} (or also the operator $\A$), counted with algebraic multiplicities.

\begin{thm}
For each $n\in \mathbb{N}$,  the eigenvalue $\lambda_n^\eps$ of  problem \eqref{LimitPrbYa}--\eqref{LimitPrbY=W} converges as \hbox{$\eps\to 0$} to the eigenvalue $\lambda_n$ of \eqref{LimitPrbYa}--\eqref{LimitPrbY=W} with the same number.
That is, if $\lambda$ is an eigenvalue of \eqref{LimitPrbYa}--\eqref{LimitPrbY=W} with algebraic multiplicity $m$, then there exists a neighbourhood of $\lambda$ which contains exactly $m$ eigenvalues  of \eqref{PertPrbEq}--\eqref{PertPrbCondB} for $\eps$ small enough.
\end{thm}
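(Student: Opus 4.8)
The plan is to deduce the spectral convergence entirely from the norm resolvent convergence of Theorem~\ref{ThmResolventConvergence} by means of Riesz spectral projections, a tool that is insensitive to self-adjointness and automatically keeps track of algebraic multiplicities. Fix $\zeta\in\mathbb{C}\setminus\Real$; since $\sigma(\A)$ and every $\sigma(\Ae)=\sigma(T_\eps)$ are real, we have $\zeta\in\rho(\A)\cap\rho(\Ae)$, and \eqref{ResolventEst} gives $\|\rR(\Ae)-\rR(\A)\|\to0$. The first step is to upgrade this single-point estimate to uniform convergence of the resolvents on compact subsets of $\rho(\A)$. Using the resolvent identity in the form $\rRz(T)=\rR(T)\,[E-(z-\zeta)\rR(T)]^{-1}$ together with a Neumann-series argument, the closeness of $\rR(\Ae)$ to $\rR(\A)$ forces $[E-(z-\zeta)\rR(\Ae)]^{-1}$ to exist and to be close to its limit, uniformly for $z$ in any compact set on which $\rRz(\A)$ is bounded. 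Hence every compact $K\subset\rho(\A)$ lies in $\rho(\Ae)$ for $\eps$ small, and $\rRz(\Ae)\to\rRz(\A)$ uniformly on $K$.

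Next I would localize around a fixed eigenvalue. By Theorem~\ref{ThmSpA} the spectrum of $\A$ is real and discrete with finite-dimensional root subspaces $X_\lambda$, and $\A$ has compact resolvent, since \eqref{ResolventA} expresses $\rR(\A)$ through the compact operators $\rR(A_a)$, $\rR(B)$, $\rR(A_b)$ and the finite-rank (indeed rank-one) operators $T_a(\zeta)$, $T_b(\zeta)$. Given an eigenvalue $\lambda$ of $\A$ of algebraic multiplicity $m=\dim X_\lambda$, choose $\delta>0$ so small that the circle $\Gamma=\{z:|z-\lambda|=\delta\}$ together with its interior meets $\sigma(\A)$ only at $\lambda$. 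Then $\Gamma$ is a compact subset of $\rho(\A)$, so by the previous step $\Gamma\subset\rho(\Ae)$ for $\eps$ small, and the Riesz projections
\begin{equation*}
  P(\A)=-\frac{1}{2\pi i}\oint_\Gamma \rRz(\A)\,dz,\qquad
  P_\eps=-\frac{1}{2\pi i}\oint_\Gamma \rRz(\Ae)\,dz
\end{equation*}
satisfy $\|P_\eps-P(\A)\|\to0$ as $\eps\to0$.

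I would then read off the eigenvalue count from the ranks of these projections. The rank of $P(\A)$ equals $\dim X_\lambda=m$, while the rank of $P_\eps$ equals the total algebraic multiplicity of the eigenvalues of $\Ae$ enclosed by $\Gamma$; since every $\lambda^\eps_n$ is simple, this rank is exactly the number of eigenvalues of $\Ae$ inside the disk $\{|z-\lambda|<\delta\}$. As $\|P_\eps-P(\A)\|<1$ for $\eps$ small, the two projections are similar and have equal rank, so $\Ae$ has precisely $m$ eigenvalues near $\lambda$; this is the neighbourhood assertion. To obtain the labelled convergence $\lambda_n^\eps\to\lambda_n$, I would combine this with the complementary fact that any compact set disjoint from $\sigma(\A)$ lies in $\rho(\Ae)$ for small $\eps$, so no perturbed eigenvalues accumulate away from $\sigma(\A)$. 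Enclosing the distinct points of $\sigma(\A)$ that lie in a window containing $\lambda_1,\dots,\lambda_n$ by disjoint disks and ordering the real eigenvalues increasingly, the counts inside the disks match the multiplicities, which forces the $n$-th perturbed eigenvalue to sit next to $\lambda_n$.

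The step I expect to require the most conceptual care is the passage from the rank equality of the projections to the eigenvalue count, precisely because $\A$ is non-self-adjoint. The point is that the Riesz projection records \emph{algebraic} multiplicity, so the possibly non-trivial Jordan cells of $\A$ (multiplicities up to three, as in Theorem~\ref{ThmSpA}) are matched by the coalescence of exactly that many simple eigenvalues of $\Ae$, and no min--max characterization is available to shortcut this. The uniform resolvent convergence on $\Gamma$ is routine but must be stated with care, as it is what guarantees both $\Gamma\subset\rho(\Ae)$ and the norm convergence $P_\eps\to P(\A)$ on which the whole argument rests.
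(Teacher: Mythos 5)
Your argument is correct and is essentially the paper's: both rest on the norm resolvent convergence of Theorem~\ref{ThmResolventConvergence} together with the stability of Riesz spectral projections (hence of algebraic multiplicities) under small norm perturbations. The only difference is one of packaging --- the paper applies the classical perturbation theorems for compact operators to $K=\rR(\A)$, $K_\eps=\rR(\Ae)$ and reads off the eigenvalues via $\lambda\mapsto(\lambda-\zeta)^{-1}$, whereas you integrate the resolvents directly around $\lambda\in\sigma(\A)$, which costs you the (routine, correctly executed) preliminary step of upgrading the single-point estimate \eqref{ResolventEst} to uniform resolvent convergence on compact subsets of $\rho(\A)$.
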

\begin{proof}
The theorem follows from the norm resolvent convergence of $\Ae$ proved in Theorem~\ref{ThmResolventConvergence} and some
ge\-ne\-ral results on the approximation of eigenvalues of compact operators.
Let $K$ be a compact operator in a separable  Hilbert space $\cH$.
Suppose that $\{K_\eps\}_{\eps>0}$ is a sequence of compact operators in $\cH$ such that $K_\eps\to K$ as $\eps\to 0$ in the uniform norm.
Let $\mu_1, \mu_2, \dots$ be the nonzero eigenvalues of $K$ ordered by decreasing magnitude taking account of algebraic multiplicities.
Then for each $\eps>0$ there is an ordering of the eigenvalues
$\mu_1(\eps), \mu_2(\eps), \dots$ of $K_\eps$ such that $\lim_{\eps\to 0}\mu_n(\eps)=\mu_n$, for each natural number $n$.
Suppose that $\mu$ is a nonzero eigenvalue of $K$ with algebraic multiplicity $m$ and  $\Gamma_\mu$ is a circle centered at $\mu$ which lies in $\rho(K)$ and contains no other points of $\sigma(K)$. Then, there is an $\eps_0$ such that, for $0<\eps\leq\eps_0$, there are exactly $m$ eigenvalues
(counting algebraic multiplicities) of $K_\eps$ lying inside $\Gamma_\mu$ and all points of $\sigma(K_\eps)$ are bounded away from $\Gamma_\mu$ \cite[Ch.1]{GohbergKrein1978}, \cite[Ch.XI-9]{DunfordSchwartz1963}, \cite{BrambleOsborn1973}.

We apply these results to $K=\rR(\A)$ and $K_\eps=\rR(\Ae)$.
Then we have
\begin{equation*}
  \sigma_p(\rR(\A))=\left\{
  \frac{1}{\lambda_n-\zeta}, \; n\in \mathbb{N}
  \right\}, \qquad
  \sigma_p(\rR(\Ae))=\left\{
  \frac{1}{\lambda_n^\eps-\zeta}, \; n\in \mathbb{N}
  \right\};
\end{equation*}
both eigenvalue sequences are ordered by decreasing magnitude.
Since $\Ae\to \A$ in the norm resolvent sense as $\eps\to 0$, that is,
$\|\rR(\Ae)-\rR(\A)\|\to 0$ as $\eps\to 0$,
we have the ``number-by-number'' convergence of the eigenvalues
\begin{equation*}
  \frac{1}{\lambda_n^\eps-\zeta}\to \frac{1}{\lambda_n-\zeta},\qquad \text{as }\eps\to 0,
\end{equation*}
from which the desired conclusion follows.
\end{proof}

\begin{rem}
  We expect that the estimate
\begin{equation*}
  \left|\lambda_n^\eps-\lambda_n\right|\leq C_n\sqrt{\eps}
\end{equation*}
to be correct for each $n\in \mathbb{N}$ and some constants $C_n$. However, it does not follow directly from bound \eqref{ResolventEst}, because resolvents $\rR(\A)$ and $\rR(\Ae)$ are not in general normal operators.
\end{rem}

\section{Some Remarks On Eigenfunction Convergence}
Since the multiplicity of eigenvalues of the limit operator is up to $3$,
the bifurcation pictures for  multiple eigenvalues of \eqref{LimitPrbYa}--\eqref{LimitPrbY=W} are quite complicated.
The bifurcations of eigenvalues as well the eigensubspaces can be described
by a more accurate asymptotic analysis. We omit the details here, because we will consider these questions in a forthcoming publication. However we can obtain some results on the limit behaviour of eigenfunctions   that follow directly from the norm resolvent convergence $\Ae\to\A$.

Let us return to compact the operators $K$ and $K_\eps$ which  appeared in the previous section. We consider the Riesz spectral projections
\begin{equation*}
  E(\mu)=\frac{1}{2\pi i}\int_{\Gamma_\mu} \rRz(\A)\,dz, \qquad
  E_\eps(\mu)=\frac{1}{2\pi i}\int_{\Gamma_\mu} \rRz(\Ae)\,dz.
\end{equation*}
The range $R(E(\mu))$ of $E(\mu)$ is the space of generalized eigenfunctions of $K$ corresponding to $\mu$ and $R(E_\eps(\mu))$ is the direct sum of the subspaces of generalized eigenfunctions of $K_\eps$ associated with the eigenvalues of $K_\eps$ inside  $\Gamma_\mu$. If $K_\eps\to K$ as $\eps\to 0$ in the  norm, then $E_\eps(\mu)\to E(\mu)$ in the norm, and therefore $\dim R(E_\eps(\mu))=\dim R(E(\mu))=m$, where $m$ is the algebraic multiplicity of $\mu$.

\begin{thm}
Let $y_{\eps,n}$ be the   eigenfunction of  \eqref{PertPrbEq}--\eqref{PertPrbCondB} which corresponds to  the eigenvalue $\lambda_n^\eps$ and  $\|y_{\eps,n}\|_{L_2(r,\cI)}=1$.

Suppose that $\lambda_n^\eps\to \lambda_n$, where $\lambda_n$ is a simple eigenvalue of $\A$ belonging to $\sigma(A_a)$. Then the eigenfunction $y_{\eps,n}$ converges in $L_2(\cI)$ as $\eps\to 0$ to the function
\begin{equation*}
  y(x)=
  \begin{cases}
    u_n(x),& \text{if }x\in\Ia,\\
    0,& \text{if }x\in\Ib
  \end{cases},
\end{equation*}
where $u_n$  is an normalized eigenfunction of $A_a$ associated with $\lambda_n$,  that is,
\begin{equation*}
  -u_n''+qu_n=\lambda_n ru_n\quad \text{in } \Ia,\quad
 \ell_au_n=0,\quad u_n(0)=0, \qquad \|u_n\|_{L_2(r,\Ia)}=1.
\end{equation*}
Similarly if  $\lambda_n$ belongs to $\sigma(A_b)$ and $\lambda_n$ is simple, then $y_{\eps,n}\to y$ in $L_2(\cI)$ as $\eps\to 0$, where
\begin{equation*}
  y(x)=
  \begin{cases}
    0,& \text{if }x\in\Ia,\\
    v_n(x),& \text{if }x\in\Ib
  \end{cases}
\end{equation*}
and $v_n$  is an normalized eigenfunction of $A_b$ with eigenvalue $\lambda_n$, i.e.,
\begin{equation*}
-v_n''+qv_n=\lambda_n rv_n\quad \text{in } \Ib,\quad
v_n(0)=0,\quad \ell_bv_n=0,\qquad \|v_n\|_{L_2(r,\Ib)}=1.
\end{equation*}

Assume $\lambda_n^\eps\to \lambda_n$, where $\lambda_n$ is a simple eigenvalue of $\A$ belonging to  $\sigma(B)$. Then the eigenfunction $y_{\eps,n}$ converges in $L_2(\cI)$ to  a solution $y$ of the problem
\begin{gather*}
  -y''+qy=\lambda_n ry\quad \text{in } \cI\setminus\{0\},\quad\ell_ay=0,\quad
 \ell_by=0,\\
y(-0)=\theta w_n(-1), \quad y(+0)= \theta w_n(1),
\end{gather*}
where  $w_n$ is the corresponding eigenfunction of $B$ such that $\|w_n\|_{L_2( h,\cJ)}=1$. Normalizing factor $\theta$ is given by
\begin{equation*}
  \theta=\left(\|T_a(\lambda_n)w_n\|^2_{L_2(r,\Ia)}
  +\|T_b(\lambda_n)w_n\|^2_{L_2(r,\Ib)}\right)^{-1}.
\end{equation*}

\end{thm}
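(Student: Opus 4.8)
The plan is to derive everything from the norm resolvent convergence $\Ae\to\A$ of Theorem~\ref{ThmResolventConvergence} together with the convergence of the Riesz projections recorded in the previous section, and then to translate the resulting convergence of eigenvectors in the fixed space $\cL$ into $L_2(\cI)$-convergence of the eigenfunctions. Throughout, fix $\zeta\in\mathbb{C}\setminus\Real$ and write $\mu_n=(\lambda_n-\zeta)^{-1}$.

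First I would lift each eigenfunction $y_{\eps,n}$ to the associated eigenvector $Y_\eps=(\phi_a^\eps,\psi^\eps,\phi_b^\eps)$ of $\Ae$ in $\cL$, where $\phi_a^\eps$, $\phi_b^\eps$ are the extensions of $y_{\eps,n}$ from $\Iae$, $\Ibe$ to $\Ia$, $\Ib$ and $\psi^\eps(t)=y_{\eps,n}(\eps t)$; this is exactly the correspondence discussed after the definition of $\Ae$, and it is one-to-one because $\lambda_n^\eps$ is simple. Since $\lambda_n$ is a simple eigenvalue of $\A$, its root subspace $X_{\lambda_n}$ is one-dimensional and spanned by the explicit eigenvector $\Phi$ furnished by Theorem~\ref{ThmSpA}(ii): namely $\Phi=(u_n,0,0)$ when $\lambda_n\in\sigma(A_a)$, $\Phi=(0,0,v_n)$ when $\lambda_n\in\sigma(A_b)$, and $\Phi=(T_a(\lambda_n)w_n,w_n,T_b(\lambda_n)w_n)$ when $\lambda_n\in\sigma(B)$. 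Because the Riesz projections satisfy $E_\eps(\mu_n)\to E(\mu_n)$ in the operator norm of $\cL$ and both are of rank one for $\eps$ small, the vector $E_\eps(\mu_n)\Phi$ is a nonzero eigenvector of $\Ae$ lying in $\spn\{Y_\eps\}$ and converges to $\Phi$. Hence $Y_\eps$ is a scalar multiple of $E_\eps(\mu_n)\Phi$, and after choosing the sign of $y_{\eps,n}$ appropriately, $Y_\eps\to\gamma\Phi$ in $\cL$ for a constant $\gamma>0$ that will be fixed by the normalization $\|y_{\eps,n}\|_{L_2(r,\cI)}=1$.

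Next I would convert this $\cL$-convergence into $L_2(\cI)$-convergence of $y_{\eps,n}$. Splitting $\cI=\Iae\cup(-\eps,\eps)\cup\Ibe$, on the two outer subintervals $y_{\eps,n}$ coincides with $\phi_a^\eps$ and $\phi_b^\eps$, which converge in $L_2(r,\Ia)$ and $L_2(r,\Ib)$ to the outer components of $\gamma\Phi$ and therefore, since $r$ is bounded and uniformly positive, in the ordinary $L_2$-norm. On the inner interval the change of variable $x=\eps t$ gives $\int_{-\eps}^{\eps}r|y_{\eps,n}|^2\,dx=\eps\int_{\cJ}r(\eps t)|\psi^\eps(t)|^2\,dt=O(\eps)$, because $\|\psi^\eps\|_{L_2(h,\cJ)}$ stays bounded; the same bound with the (continuous, hence bounded) limit profile shows that the inner part does not contribute to the limit. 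Combining the three pieces yields $y_{\eps,n}\to y$ in $L_2(\cI)$, where $y$ is built from the outer components of $\gamma\Phi$.

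Finally I would read off the limit in each case. If $\lambda_n\in\sigma(A_a)$ then $\gamma\Phi=\gamma(u_n,0,0)$, the normalization forces $\gamma=1$, and $y=u_n$ on $\Ia$, $y=0$ on $\Ib$; the case $\lambda_n\in\sigma(A_b)$ is symmetric. If $\lambda_n\in\sigma(B)$ then $y$ equals $\gamma\,T_a(\lambda_n)w_n$ on $\Ia$ and $\gamma\,T_b(\lambda_n)w_n$ on $\Ib$, which by the definition of $T_a,T_b$ at $\eps=0$ are precisely the solutions of the stated boundary value problem on $\cI\setminus\{0\}$ with $y(-0)=\gamma\,w_n(-1)$, $y(+0)=\gamma\,w_n(1)$. \textbf{The delicate point is the evaluation of $\gamma$.} Since the inner mass contributes only $O(\eps)$ to $\|y_{\eps,n}\|_{L_2(r,\cI)}$, the unit normalization is carried entirely by the outer parts, so $\gamma^2\big(\|T_a(\lambda_n)w_n\|^2_{L_2(r,\Ia)}+\|T_b(\lambda_n)w_n\|^2_{L_2(r,\Ib)}\big)=1$, which identifies $\gamma$ with the factor $\theta$ of the statement. \emph{This normalization bookkeeping is the main obstacle:} the concentrated-mass component $\psi^\eps$ is of order one in $\cL$ yet essentially invisible to the $L_2(r,\cI)$-norm, so the limiting outer profile must be rescaled, and it is exactly the tracking of the weights $r$ and $h$ through the rescaling $x=\eps t$ that produces $\theta$. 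The remaining verifications — that $E_\eps(\mu_n)\Phi$ is a genuine eigenvector for $\eps$ small, that the proportionality scalar converges, and the $O(\eps)$ inner estimate — are routine consequences of Theorem~\ref{ThmResolventConvergence} and Proposition~\ref{PropTaTbSaSb}.
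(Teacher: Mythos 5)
Your proposal follows essentially the same route as the paper: the norm resolvent convergence gives convergence of the rank-one Riesz projections, hence of the normalized eigenvectors in $\cL$, and the $L_2(\cI)$-limit is then read off by splitting $\cI$ into the two outer intervals and the inner interval whose contribution is $O(\eps)$ after the rescaling $x=\eps t$; you are in fact somewhat more explicit than the paper about reconciling the $\cL$-normalization of $Y_{\eps,n}$ with the $L_2(r,\cI)$-normalization of $y_{\eps,n}$. One small inconsistency to fix: your normalization identity $\gamma^2\bigl(\|T_a(\lambda_n)w_n\|^2_{L_2(r,\Ia)}+\|T_b(\lambda_n)w_n\|^2_{L_2(r,\Ib)}\bigr)=1$ yields $\gamma=\bigl(\|T_a(\lambda_n)w_n\|^2_{L_2(r,\Ia)}+\|T_b(\lambda_n)w_n\|^2_{L_2(r,\Ib)}\bigr)^{-1/2}$, i.e.\ the square root of the $\theta$ printed in the statement, so either the exponent in the statement is a typo or your identification of $\gamma$ with $\theta$ is off by a square root; the derivation itself is sound.
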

\begin{proof}
In the case when $K=\rR(\A)$,  $K_\eps=\rR(\Ae)$, $\lambda$ is a unique point of $\sigma(\A)$ lying inside $\Gamma_\lambda$, and $\eps$ is small enough, we see  that $X_\lambda=R(E(\frac{1}{\lambda-\zeta}))$ is a subspace of generalized eigenfunctions of $\A$ corresponding to the eigenvalue $\lambda$, and  subspace $X_\lambda^\eps=R(E_\eps(\frac{1}{\lambda-\zeta}))$  is  generated by all eigenfunctions of $\Ae$ for which $\lambda^\eps_n\to \lambda$ as $\eps\to 0$.
Then the norm resolvent convergence $\Ae\to\A$ implies that  the gap between $X_\lambda^\eps$ and $X_\lambda$ tends to zero as $\eps\to 0$ for each $\lambda\in \sigma_p(\A)$. In particular, if $\lambda_n$ is a simple eigenvalue of $\A$ with eigenvector $Y_n$ and $Y_{\eps,n}$ is an eigenvector of $\Ae$ that corresponds to $\lambda_n^\eps$, then
$Y_{\eps,n}\to Y_n$ in $\cL$ as  $\eps\to 0$, provided $\|Y_{\eps,n}\|_{\cL}=\|Y_n\|_{\cL}=1$.

Assume $\lambda_n$ is a simple eigenvalue of $\A$ and $\lambda_n\in \sigma(A_a)$. In view of Theorem~\ref{ThmSpA}, subspace  $X_\lambda$ is generated by vector $Y_n=(u_n,0,0)$.
Then $Y_{\eps,n}\to Y_n$ as $\eps\to 0$ in the norm of $\cL$. If we set $Y_{\eps,n}=(y^a_\eps, w_\eps, y^b_\eps)$, then the eigenfunction $y_{\eps,n}$ of \eqref{PertPrbEq}--\eqref{PertPrbCondB} can be written as
\begin{equation*}
  y_{\eps,n}(x)=
  \begin{cases}
    y^a_\eps(x),& \text{if }x\in\Iae,\\
    w_\eps(x/\eps), & \text{if }x\in(-\eps,\eps),\\
    y^b_\eps(x), & \text{if }x\in\Ibe.
  \end{cases}
\end{equation*}
So we have
\begin{multline*}
  \|y_{\eps,n}-y_n\|^2_{L_2(\cI)}=\int_a^{-\eps}|y^a_\eps-u_n|^2\,dx
  +\int^b_{\eps}|y^a_\eps|^2\,dx
  \\
  + \int_{-\eps}^0|w_\eps(\tfrac{x}{\eps})-u_n(x)|^2\,dx
  +\int^{\eps}_0\kern-2pt|w_\eps(\tfrac{x}{\eps})|^2\,dx
  \leq c_1 \|y^a_\eps-u_n\|^2_{L_2(r,\Ia)}
  \\
  +c_2\|y^b_{\eps}\|^2_{L_2(r,\Ib)}
  + c_3\eps\|w_\eps\|^2_{L_2(h,\cJ)}+\int_{-\eps}^0 |u_n|^2\,dx
  \leq c_4\|Y_{\eps,n}-Y_n\|^2_{\cL}+c_5\eps.
\end{multline*}
The right-hand side tends to zero as $\eps\to 0$, since $Y_{\eps,n}\to Y_n$ in $\cL$ and $u_n$ is bounded on $\Ia$ as an element of $W_2^2(\Ia)$.
The same proof works for the cases $\lambda_n\in \sigma(A_b)$ and $\lambda_n\in \sigma(B)$.
\end{proof}

\begin{rem}
Of course, in the case of multiple eigenvalues, we also have some information about the convergence of eigenfunctions. For instance, if we suppose that $\lambda\in\sigma(A_a)\cap \sigma(A_b)$, but $\lambda$ is not an eigenvalue of $B$, and two eigenvalues $\lambda_n^\eps$ and $\lambda_{n+1}^\eps$ tend to $\lambda$ as $\eps\to 0$, then the gap between the eigensubspace $X_\lambda$ of $\A$ and the subspace $X_\lambda^\eps=\spn\{y_{\eps,n}, y_{\eps,n+1}\}$
vanishes as $\eps\to 0$. Therefore   eigenfunctions $y_{\eps,n}$ and $y_{\eps,n+1}$ converge in $L_2(\cI)$ to some linear combinations  $c_1u_\lambda+c_2v_\lambda$, where $u_\lambda$ and $v_\lambda$ are eigenfunctions of $A_a$ and $A_b$ respectively that correspond to $\lambda$. However, without a deeper analysis of the problem, we will not know what the linear combinations are limit positions of vectors $y_{\eps,n}$ and $y_{\eps,n+1}$ in plane $X_\lambda$.
\end{rem}

\end{document}